\newtheorem{theorem}{Theorem}[section]
\newtheorem{corollary}[theorem]{Corollary}
\newtheorem{lemma}[theorem]{Lemma}
\theoremstyle{definition}
\newtheorem{defn}[theorem]{Definition}
\newtheorem{remark}[theorem]{Remark}
\newenvironment{proofof*}[1]
{\begin{trivlist}\item {\it Proof of {#1}.}}{\end{trivlist}}
\def\Z{\mathbb{Z}}
\def\m{\mathfrak{m}}
\newcommand\lk{\mbox{\upshape lk}\,}
\newcommand\inc{\iota}
\newcommand\field{\Bbbk}
\newcommand{\soc}{\mbox{\upshape Soc}\,}
\newcommand{\bK}{\mathcal{K}}
\newcommand{\bC}{\mathcal{C}}
\newcommand\kk{\Bbbk}
\newcommand\mm{\mathfrak{m}}
\DeclareMathOperator\tor{\mathrm{Tor}}
\DeclareMathOperator\cost{\mathrm{cost}}
\DeclareMathOperator\coker{\mathrm{coker}}
\DeclareMathOperator\id{\mathrm{id}}
\begin{document}

\title{\vspace{-1ex}Face numbers of pseudomanifolds with isolated
singularities}

\author{
Isabella Novik
\thanks{Research partially supported by Alfred P.~Sloan Research
Fellowship and NSF grant DMS-0801152}\\
\small Department of Mathematics, Box 354350\\[-0.8ex]
\small University of Washington, Seattle, WA 98195-4350, USA\\[-0.8ex]
\small \texttt{novik@math.washington.edu}
\and Ed Swartz
\thanks{Research partially supported by NSF grant DMS-0900912}\\
\small Department of Mathematics, \\[-0.8ex]
\small Cornell University, Ithaca NY, 14853-4201, USA\\[-0.8ex]
\small \texttt{ebs22@cornell.edu}
\date{April 28, 2010}
}

\maketitle

\begin{abstract}
We investigate the face numbers of simplicial complexes with Buchsbaum
vertex links, especially pseudomanifolds with isolated singularities.
This includes deriving Dehn-Sommerville relations for pseudomanifolds
with isolated singularities and establishing lower bound theorems  when
the singularities are also homologically isolated.  We give formulas for
the Hilbert function of a generic Artinian reduction of the face ring
when the singularities are homologically isolated and for any pure
two-dimensional complex.  Some examples of spaces where the
$f$-vector can be completely characterized are described.

Let $\Delta$ and $\Delta'$ be two simplicial complexes that
are homeomorphic and have the same $f$-vector.
In \cite{MNS} the question is raised of whether or not the Hilbert
functions of generic Artinian reductions of their face rings are
identical. We prove that this is the case if the spaces have isolated
singularities and are PL-homeomorphic. \end{abstract}

\noindent{\it 2010 Mathematics Subject Classification.\hspace{-1.2pt}}
05E45,~13F55,~05E40,~13D45

\section{Introduction}

Stanley's introduction of commutative algebra into the study of face
numbers of simplicial complexes via the face ring  revolutionized the
subject.   For instance, he was able to give a complete characterization
of all possible $f$-vectors of Cohen-Macaulay complexes \cite{St77}.  A
simplicial complex is Cohen-Macaulay (CM) if the homology of the link of
any face (including the empty face) is trivial except possibly in the top
dimension.  In \cite{Sch} Schenzel showed how to extend many of these
ideas to Buchsbaum complexes, that is, pure complexes
whose vertex links are CM. This
includes triangulations of manifolds (with and without boundary).  In
\cite{MNS} algebraic aspects of  face rings of spaces with more
complicated singularities were studied.  Here we consider combinatorial
and algebraic properties of face rings of pure complexes whose vertex
links
are Buchsbaum with a particular emphasis on pseudomanifolds with isolated
singularities.

We start by introducing the basic notations, definitions and results we
need for simplicial complexes, face rings and local cohomology.  In
Section \ref{Dehn-Sommerville} we derive  Dehn-Sommerville relations for
pseudomanifolds with isolated singularities.  This is followed by a
close examination of spaces with homologically isolated singularities.
(See Section \ref{homologically isolated singularities} for a
definition.)  In this case we are able to establish formulas for the
Hilbert function of an arbitrary Artinian reduction of the face ring and
a
lower bound formula analogous to \cite[Theorem 5.2]{NS-socle} for
manifolds.  Highly connected (in the topological sense) spaces are the
subject of Section~\ref{depth}.  A corollary of these results is a
formula for the Hilbert function of generic Artinian reductions of  pure
two-dimensional complexes (Eq.~(\ref{2d})). Some examples of
pseudomanifolds where we can give a
complete characterization of their $f$-vectors are the subject of Section
\ref{Examples}.  Finally, we show that the Hilbert function of a generic
Artinian reduction of the face ring of a complex with isolated
singularities is determined by its $f$-vector and PL-homeomorphism
type.

\section{Preliminaries}

In this section we review certain aspects of simplicial complexes
and their face rings that will be needed in the rest of the paper.
An excellent general reference to this material is \cite{St96}.

\subsection{Simplicial complexes}
Let $\Delta$ be a $(d-1)$-dimensional simplicial complex. The main object
of our study is the {\em $f$-vector} of $\Delta$,
$f(\Delta):=(f_{-1}(\Delta), f_0(\Delta),\ldots, f_{d-1}(\Delta))$,
where $f_i(\Delta)$ denotes the number of $i$-dimensional faces of
$\Delta$. It is sometimes more convenient to work with the $h$-vector of
$\Delta$,
$h(\Delta)=(h_0(\Delta), \ldots, h_d(\Delta))$, defined by $$
 \sum_{i=0}^d h_i x^{d-i} = \sum_{i=0}^d f_{i-1} (x-1)^{d-i}.
$$
Thus, $h_0(\Delta)=1$, $h_1(\Delta)=f_0(\Delta)-d$,
 and $h_{d}(\Delta)=(-1)^{d-1}\tilde{\chi}(\Delta)$,
where $\tilde{\chi}(\Delta)$ is the reduced Euler characteristic of
$\Delta$.

Fix an infinite field $\kk$.
Another set of invariants we need are the {\em Betti
numbers}, $$\beta_i(\Delta):=\dim_{\kk}\tilde{H}_i(\Delta;\kk), \quad
0\leq i \leq \dim\Delta.$$
Here $\tilde{H}_i(\Delta;\kk)$ is the $i$-th reduced
simplicial homology of $\Delta$ computed with coefficients in $\kk$.

For a face $F$ of $\Delta$, the {\em link} of $F$ in $\Delta$ is $\lk F
:=\{G : F \cap G = \emptyset,\  F \cup G \in \Delta\}.$
We use the following terminology: $\Delta$ is a {\em
$\kk$-Cohen--Macaulay complex}
 ($\kk$-CM, for short) if for every face $F\in\Delta$ (including the
empty face), $\beta_i(\lk F)=0$ for all $i<d-|F|-1$;
if in addition $\beta_{d-|F|-1}(\lk F)=1$ for all $F\in\Delta$, then
$\Delta$ is called a {\em $\kk$-homology sphere}.  When no confusion is
likely we may suppress the field $\field$.

A face of $\Delta$ is a {\em facet} if it is maximal under inclusion. We
say that $\Delta$ is {\em pure} if all facets of $\Delta$ have dimension
$d-1$, and that $\Delta$ is a {\em pseudomanifold} if it is  pure and
each codimension-one face of $\Delta$ is contained in exactly two facets.

A pure simplicial complex $\Delta$ is {\em $\kk$-Buchsbaum} if all vertex
links of $\Delta$ are $\kk$-CM, and it is a {\em $\kk$-homology manifold}
if all vertex links are $\kk$-homology spheres. A pure simplicial complex
$\Delta$
is a {\em $\kk$-space with isolated singularities} if all vertex links of
$\Delta$
are $\kk$-Buchsbaum (equivalently, all edge links are $\kk$-CM), and it
is a {\em $\kk$-pseudomanifold with isolated singularities} if all vertex
links are $\kk$-homology manifolds (equivalently, all edge links are
$\kk$-homology spheres). A vertex $v$ of a space (pseudomanifold, resp.)
with isolated
singularities is called {\em singular} if the link of $v$ is not $\kk$-CM
(not a $\kk$-homology sphere, resp.).

\subsection{Face rings}
The {\em face ring} (also known as the {\em Stanley-Reisner ring}) of a
simplicial complex $\Delta$ on the vertex set $V$ is
$$\kk[\Delta]:=\kk[x_v  :  v\in V]/I_\Delta,$$
where $I_\Delta$ is the squarefree monomial ideal generated by non-faces:
$$I_\Delta = (x_{v_1}\ldots x_{v_k}  :  \{v_1,\ldots,
v_k\}\notin\Delta).
$$
It is a result of Reisner \cite{Reis} that $\Delta$ is a $\kk$-CM
simplicial complex if and only if $\kk[\Delta]$ is a Cohen--Macaulay
ring, and it is a result of Schenzel \cite{Sch}
that $\Delta$ is a $\kk$-Buchsbaum complex
if and only if $\kk[\Delta]$ is a Buchsbaum ring.

A {\em linear system of parameters} (an {\em l.s.o.p.}, for short)
for $\kk[\Delta]$ is a set of $d=\dim\Delta+1$ linear
forms  $\Theta=\{\theta_1, \ldots, \theta_d\}$ such that
$$\kk(\Delta, \Theta):= k[\Delta]/(\theta_1, \ldots, \theta_d)$$
is a finite-dimensional $\kk$-space. If $\kk$ is an infinite field, then
an l.s.o.p.~always exists. The ring $\kk(\Delta, \Theta)$ is called an
{\em Artinian reduction} of $\kk[\Delta]$.

\subsection{Local cohomology}   \label{local-coh:subsection}
Let $\m=(x_v  :  v\in V)$ be the irrelevant ideal of the polynomial
ring $S:=\kk[x_v  :  v\in V]$. For a finitely-generated
graded  $S$-module $M$,
we denote by $M_k$ the $k$-th homogeneous component of $M$
and by $H^i_\m(M)$ the $i$-th local cohomology of $M$ with respect to
$\m$.
For $t\in S$ define $(0:_M t) :=\{\nu\in M : t\nu=0\}$.
The {\it socle} of $M$ is $$\soc(M)=\bigcap_{v\in V}(0:_M x_v)=\{\nu\in
M  :  \m\nu=0\}.$$

For a simplicial complex $\Delta$, Gr\"abe \cite{Gr} gave a description
of $H^i_\m(\kk[\Delta])$ and its $S$-module structure in terms of the
simplicial cohomology of the links of $\Delta$ and the maps between them.
In the case when $\Delta$ is a space with isolated singularities, this
description takes the following simple form that
we review now. For $v\in V$, let {\em costar} of $v$ be $\cost(v)=\{F \in
\Delta : v\notin F\}$, and consider the $i$-th simplicial cohomology of
the pair $(\Delta, \cost v)$
with coefficients in $\kk$: $$H^i_{\{v\}}(\Delta):=H^i(\Delta, \cost
v;\kk) \cong \tilde{H}^{i-1}(\lk v; \kk).$$
Also set $H^i_{\emptyset}(\Delta):=H^i(\Delta, \emptyset;\kk)
 =\tilde{H}^i(\Delta;\kk),$
and let $\inc^\ast: H^i_{\{v\}}(\Delta)\to H^i_\emptyset(\Delta)$, for
$v\in V$, be the map induced by inclusion. In the following,
$\delta_{wv}$ denotes the Kronecker delta, and $(s_v)_{v\in V}$ an
element of $\bigoplus_{v\in V} H^{i}_{\{v\}}(\Delta)$.

\begin{theorem} {\rm [Gr\"abe]}  \label{Grabe}
Let $\Delta$ be a space with isolated singularities and $-1\leq i< \dim
\Delta$. Then $$H^{i+1}_\m(\kk[\Delta])_{-j}=
\left\{\begin{array}{ll}
 0 & \mbox{ (if $j<0$)}, \\
 H^{i}_\emptyset(\Delta) & \mbox{ (if $j=0$)},\\
 \bigoplus_{v\in V} H^{i}_{\{v\}}(\Delta) & \mbox{ (if $j>0$)}.
 \end{array}
 \right.
$$
The $S$-module structure on $H^{i+1}_\m(\kk[\Delta])$ is given by
\begin{eqnarray*}
\cdot x_w :
 H^{i+1}_\m(\kk[\Delta])_{-(j+1)} &\to& H^{i+1}_\m(\kk[\Delta])_{-j} \\
          (s_v)_{v\in V} &\mapsto&  \left\{
           \begin{array}{ll}
                   \inc^{\ast}(s_w) & \mbox{ if $j=0$,}\\
                  (\delta_{wv}s_v)_{v\in V}  & \mbox{ if $j>0$.}
             \end{array}
       \right. \end{eqnarray*} \end{theorem}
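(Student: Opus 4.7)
The plan is to specialize Gräbe's general description of $H^\bullet_\m(\kk[\Delta])$, which is valid for an arbitrary simplicial complex, to the present situation, where the Cohen-Macaulay property of all edge links forces most terms to vanish. Gräbe's full formula provides a multigraded decomposition in which, for $a \in \ZZ^V$ with $a \leq 0$ and support $W = \{v \in V : a_v < 0\}$, one has
\[
H^{i+1}_\m(\kk[\Delta])_a \cong \tilde H^{\,i-|W|}(\lk W;\kk)
\]
if $W \in \Delta$ and zero otherwise, together with an explicit description of the action of each variable $x_w$ in terms of natural maps between link cohomologies.

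Collecting these multigraded pieces by total degree gives $H^{i+1}_\m(\kk[\Delta])_{-j}$ as a direct sum of $\binom{j-1}{|W|-1}$ copies of $\tilde H^{i-|W|}(\lk W;\kk)$ over faces $W \in \Delta$ (with the empty face contributing only when $j = 0$). The key input from the hypothesis is the following vanishing: when $|W| \geq 2$, the face $W$ contains some edge $e$, and $\lk W$ is the link of $W \setminus e$ inside the Cohen-Macaulay complex $\lk e$; it is therefore itself Cohen-Macaulay of dimension $d - |W| - 1$. The assumption $i < \dim \Delta = d - 1$ then yields $i - |W| < d - |W| - 1$, so $\tilde H^{i-|W|}(\lk W;\kk) = 0$ and all contributions from $|W| \geq 2$ disappear. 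What remains is $\tilde H^i(\Delta;\kk)$ when $j = 0$ and one copy of $\tilde H^{i-1}(\lk v;\kk)$ per vertex $v$ for each $j \geq 1$, which under the identifications $H^i_\emptyset(\Delta) = \tilde H^i(\Delta;\kk)$ and $H^i_{\{v\}}(\Delta) \cong \tilde H^{i-1}(\lk v;\kk)$ matches the asserted decomposition.

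For the module structure, the only multidegree shifts $a \mapsto a + e_w$ that land in the surviving summands are $-(j+1)e_v \mapsto -je_v$ for $j \geq 1$ and $-e_w \mapsto 0$. In Gräbe's formulas the first acts as the identity on the $v$-summand when $w = v$, and as zero when $w \neq v$ (since adding $e_w$ would produce a positive coordinate, sending the image into a vanishing multidegree), yielding the Kronecker delta rule. The second is the restriction map induced by the inclusion $\lk w \hookrightarrow \Delta$, which under the isomorphism $H^i(\Delta, \cost w;\kk) \cong \tilde H^{i-1}(\lk w;\kk)$ corresponds exactly to $\inc^\ast$. I expect the main obstacle to lie in this last identification: one must verify carefully that Gräbe's intrinsic description of the variable action, phrased in terms of restriction maps between link cohomologies, translates under the pair-to-link isomorphism into the inclusion-induced map $H^i(\Delta, \cost w;\kk) \to H^i(\Delta;\kk)$ with the correct sign and direction, rather than its dual or a shifted connecting homomorphism.
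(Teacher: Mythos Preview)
The paper does not prove this theorem at all: it is stated as background, attributed to Gr\"abe \cite{Gr}, with the remark that ``in the case when $\Delta$ is a space with isolated singularities, this description takes the following simple form.'' Your proposal---specializing Gr\"abe's multigraded Hochster-type formula and observing that the CM hypothesis on edge links kills all summands with $|W|\ge 2$---is exactly the specialization the paper is invoking, and your vanishing argument and identification of the module structure are correct.
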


\section{Dehn-Sommerville relations} \label{Dehn-Sommerville}

One of the nicest properties of the $h$-vectors of homology manifolds
is the Dehn-Sommerville relations due to Klee \cite{Kl64} asserting that
if $\Gamma$ is an $(r-1)$-dimensional homology manifold, then
\begin{equation}   \label{DS}
    h_{r-i}(\Gamma)=h_i(\Gamma)+ (-1)^{i-1}\binom{r}{i}\left(1+(-1)^r
\tilde{\chi}(\Gamma)\right)
 \quad \mbox{ for all } 0\leq i \leq r. \end{equation}
The goal of this section is to
derive the following generalization of Eq.~(\ref{DS}) for pseudomanifolds
with isolated singularities.  In principle this could be done using the
results of \cite{CY} or \cite[Lemma 2]{Wall} that are stated in terms of
$f$-vectors and are
designed to include more general complexes. However,  as we will require
an $h$-vector version and only need it for pseudomanifolds with isolated
singularities, we derive the formula ourselves.
By convention, for all $a, b\in\Z$, $\binom{b}{0}=1$, and
$\binom{b}{a}=0$ if $a<0$.

\begin{theorem}  \label{gen-DS}
Let $\Delta$ be a $(d-1)$-dimensional simplicial complex on the
vertex set $V$. If $\Delta$ is a pseudomanifold with isolated
singularities, then for all $0\leq i \leq d$,
$$
h_{d-i}(\Delta)=h_i(\Delta) +
(-1)^{i-1}\binom{d}{i}\left(1+(-1)^d\tilde{\chi}(\Delta)\right)
+(-1)^i\binom{d-1}{i-1}
 \sum_{v\in V}\left(1+(-1)^{d-1}\tilde{\chi}(\lk v)\right).
$$
\end{theorem}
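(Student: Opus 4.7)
The plan is to reduce the theorem to Klee's classical Dehn--Sommerville relations (Eq.~(\ref{DS})) for the vertex links of $\Delta$, which are themselves $(d-2)$-dimensional $\kk$-homology manifolds by hypothesis. The argument proceeds in three steps.

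First, I would establish the auxiliary $h$-vector identity
\[
\sum_{v\in V} h_j(\lk v) \;=\; (j+1)\,h_{j+1}(\Delta) + (d-j)\,h_j(\Delta),\qquad 0\le j\le d-1.
\]
This follows from the simple double-counting formula $\sum_{v\in V} f_{i-1}(\lk v) = (i+1)\,f_i(\Delta)$ (each $i$-face of $\Delta$ contributes once through each of its $i+1$ vertices) combined with the standard inversion $f_{i-1}(\Delta)=\sum_j\binom{d-j}{i-j}h_j(\Delta)$ and its counterpart for each vertex link; comparing coefficients in $i$ yields the claim after a short binomial manipulation.

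Next, I would apply Eq.~(\ref{DS}) to each vertex link (with $r=d-1$) and sum over $v\in V$. Substituting the identity from the first step rewrites the left-hand side as $(d-i)D_i+(i+1)D_{i+1}$, where $D_i:=h_{d-i}(\Delta)-h_i(\Delta)$. Writing $A:=1+(-1)^d\tilde\chi(\Delta)$ and $B:=\sum_v\bigl(1+(-1)^{d-1}\tilde\chi(\lk v)\bigr)$, this produces the linear recursion
\[
(d-i)\,D_i+(i+1)\,D_{i+1} \;=\; (-1)^{i-1}\binom{d-1}{i}\,B,\qquad 0\le i\le d-1,
\]
which I would solve by induction on $i$ starting from $D_0 = h_d(\Delta)-h_0(\Delta)=-A$. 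Using the elementary binomial identities $(d-i)\binom{d}{i}=(i+1)\binom{d}{i+1}$ and $\binom{d-1}{i}+(d-i)\binom{d-1}{i-1}=(i+1)\binom{d-1}{i}$, one checks that the closed form
\[
D_i \;=\; (-1)^{i-1}\binom{d}{i}A + (-1)^i\binom{d-1}{i-1}B
\]
is preserved by the recursion and therefore holds for all $0\le i\le d$, yielding the theorem.

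The main obstacle is establishing the $h$-vector identity in the first step; once it is in hand, the remaining work is routine binomial bookkeeping. A minor consistency check at the boundary index $i=d$---where $D_d=-D_0=A$ must agree with the closed form---amounts to a combinatorial Gauss--Bonnet-type identity for pseudomanifolds with isolated singularities that the proof recovers as a byproduct.
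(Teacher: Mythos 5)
Your proof is correct and follows essentially the same route as the paper's. The auxiliary identity you establish in Step~1, $\sum_{v\in V} h_j(\lk v) = (j+1)h_{j+1}(\Delta) + (d-j)h_j(\Delta)$, is exactly Eq.~(\ref{short-h}) (which the paper cites from Swartz rather than rederiving), and combining it with the classical Dehn--Sommerville relations for the vertex links to obtain a first-order recursion in $D_i := h_{d-i}(\Delta)-h_i(\Delta)$, then solving by induction from $D_0=-A$, is the paper's argument; your recursion $(d-i)D_i+(i+1)D_{i+1}=(-1)^{i-1}\binom{d-1}{i}B$ is the paper's Eq.~(\ref{(i-1)->i}) under the index shift $i\mapsto i+1$.
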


\begin{proof}  We utilize Eq.~(\ref{DS}) applied to the links of
vertices --- notice that since $\Delta$ is a pseudomanifold
with isolated singularities, all vertex links are $(d-2)$-dimensional
homology manifolds.
Another ingredient needed for the proof is the following connection
between the $h$-vector of a pure simplicial complex and the sums of the
$h$-vectors of the vertex links (see \cite[Proposition 2.3]{Sw04}):
\begin{equation} \label{short-h}
ih_i(\Delta) + (d-i+1)h_{i-1}(\Delta)=
\sum_{v\in V}h_{i-1}(\lk v) \quad \mbox{ for all }
 1\leq i \leq d.
\end{equation}

The proof of the theorem is by induction on $i$. For $i=0$ the statement
reduces to $$
h_d(\Delta)-h_0(\Delta)=-\left(1+(-1)^d\tilde{\chi}(\Delta)\right).
$$
This holds as
$h_0(\Delta)=1$ and $h_d(\Delta)=(-1)^{d-1}\tilde{\chi}(\Delta)$
for all simplicial complexes.

For $i>0$, we have
\begin{eqnarray*}
ih_i(\Delta) &+& (d-i+1)h_{i-1}(\Delta) \\
 &=& \sum_{v\in V} h_{i-1}(\lk v)\\
 &=& \sum_{v\in V} \left[h_{(d-1)-(i-1)}(\lk v)+
 (-1)^{i-1}\binom{d-1}{i-1}
 \left(1+(-1)^{d-1}\tilde{\chi}(\lk v)\right)\right]\\
 &=& \left[\sum_{v\in V} h_{d-i}(\lk v)\right]+
 (-1)^{i-1}\binom{d-1}{i-1}
 \sum_{v\in V} \left(1+(-1)^{d-1}\tilde{\chi}(\lk v)\right)\\
&=& (d-i+1)h_{d-i+1}(\Delta)+ih_{d-i}(\Delta)+
 (-1)^{i-1}\binom{d-1}{i-1}\sum_{v\in V}
 \left(1+(-1)^{d-1}\tilde{\chi}(\lk v)\right),
\end{eqnarray*}
where the first and the last steps follow from Eq.~(\ref{short-h}), and
the second step from Eq.~(\ref{DS}) applied to the vertex links. Thus
\begin{equation}  \label{(i-1)->i}
i(h_{d-i}-h_i)= -(d-i+1)(h_{d-(i-1)}-h_{i-1})+
(-1)^i \binom{d-1}{i-1}\sum_{v\in V}
 \left(1+(-1)^{d-1}\tilde{\chi}(\lk v)\right).
\end{equation}
Substituting in Eq.~(\ref{(i-1)->i}) our inductive hypothesis for
$h_{d-(i-1)}-h_{i-1}$ and using that $$
\frac{1}{i} \cdot (d-i+1)\binom{d}{i-1}=\binom{d}{i} \quad \mbox{and}
\quad
\frac{1}{i} \cdot
 \left[(d-i+1)\binom{d-1}{i-2} + \binom{d-1}{i-1}\right]
=\binom{d-1}{i-1}
$$
yields the result.
\end{proof}

\noindent  The above formula can be written exclusively in terms of
$\chi(\Delta).$  When $d$ is odd every vertex link is an odd-dimensional
homology manifold, so its Euler characteristic is zero and Theorem~3.1
reduces to Eq.~(\ref{DS}). When $d$ is even, there exists $\Delta'$ ---
an odd-dimensional manifold with boundary whose boundary consists of the
disjoint union of the vertex links of $\Delta$ --- such that $\Delta$ is
homeomorphic to $\Delta'$ with each boundary component coned off. The Euler characteristic of $\Delta'$ is one-half the Euler
characteristic of its boundary and hence equals one-half the sum of the
Euler characteristics of the  vertex links and, of course, the Euler
characteristic of each cone is one.  Hence,
$\chi(\Delta) = \frac{1}{2} \sum_{v \in V} \chi(\lk v) + |V| - \sum_{v\in
V} \chi(\lk v)$ and $$\sum_{v \in V} \left(1+(-1)^{d-1}\tilde{\chi}(\lk
v)\right) = 2|V| -
\sum_{v \in V} \chi(\lk v) = 2\chi(\Delta).$$

\section{Homologically isolated singularities}  \label{homologically isolated singularities}

 We now turn our attention to inequalities. In this section we
derive lower bounds on the face numbers of spaces with
{\em homologically isolated singularities} --- a certain subclass of
spaces with isolated singularities introduced in \cite[Section 4]{MNS}.
To achieve this goal we first compute the Hilbert series of Artinian
reductions of such complexes generalizing Schenzel's formula for
Buchsbaum complexes  \cite{Sch}, which in turn is a generalization of
Stanley's formula for CM complexes \cite{St77}.

We use the notation of Subsection \ref{local-coh:subsection}.
For a linear form $\theta=\sum_{v\in V} a_vx_v$ with all $a_v\neq 0$,
consider the maps
$\inc^\ast: H^i_{\{v\}}(\Delta) \to H^i_\emptyset(\Delta)$
and their weighted direct sum
$$f^{i,\theta}: \left[\bigoplus_{v\in V}H^i_{\{v\}}(\Delta)\right]
 \to H^i_\emptyset(\Delta), \quad
 f^{i,\theta}=\sum_{v\in V}
 a_v\inc^{\ast}
 \left[H^i_{\{v\}}(\Delta) \to H^i_\emptyset(\Delta)\right].
 $$
Define $\bK_i^\theta(\Delta):=\dim_\kk \ker f^{i,\theta}$ and
$\bC_i^\theta(\Delta):=\dim_\kk \coker f^{i,\theta}$.

\begin{defn} We say that
a $(d-1)$-dimensional simplicial complex $\Delta$ on the vertex
set $V$ has
{\em homologically isolated singularities} if
(i) it is a space with isolated singularities, and (ii) for all
$0\leq i \leq d-2$, the images $\inc^\ast: H^i_{\{v\}}(\Delta) \to
H^i_\emptyset(\Delta)$
for $v\in V$ are linearly independent subspaces of
$H^i_\emptyset(\Delta)$.  Equivalently, the kernel of the above
map $f^{i, \theta}$ decomposes as a direct sum of the kernels of the
summands, and in particular it is independent of a specific choice of
$\theta$ (as long as all coefficients of $\theta$ are non-vanishing);
in this case we write $\bK_i$ and $\bC_i$ instead of $\bK_i^\theta$
and $\bC_i^\theta$, resp.
\end{defn}

Evidently, any complex with only one isolated singularity has
homologically isolated singularities. Many other examples of such
complexes are described in \cite[Section 4]{MNS}.

Schenzel \cite{Sch} proved that if $\Delta$ is a $\kk$-Buchsbaum
complex of dimension $d-1$, then the Hilbert series
 of $\kk(\Delta, \Theta)$ is independent of the choice of an
l.s.o.p.~$\Theta$ and is given by
\begin{equation} \label{Sch-eq}
F(\kk(\Delta, \Theta), \lambda) =
\sum_{i=0}^d \left(h_i(\Delta)+
\binom{d}{i}\sum_{j=1}^{i-1}(-1)^{i-j-1}
 \beta_{j-1}(\Delta)\right)\lambda^i.
\end{equation}
If $\Delta$ is $\kk$-CM, this formula reduces to Stanley's result:
$F(\kk(\Delta, \Theta), \lambda) = \sum_{i=0}^d h_i\lambda^i$.
Here we extend Schenzel's result to all complexes with homologically
isolated singularities. The main point is that
the Hilbert series of an Artinian reduction of the face ring of
such a complex $\Delta$ is determined by the $f$-numbers and the
homeomorphism type of $\Delta$.

\begin{theorem}  \label{h'}
Let $\Delta$ be a $(d-1)$-dimensional simplicial complex with
homologically
isolated singularities. Then for any l.s.o.p.~$\Theta$ of $\kk[\Delta]$,
the Hilbert series of $\kk(\Delta, \Theta)$ is equal to $\sum_{i=0}^d
h'_i(\Delta)\lambda^i$, where (for $0\leq i \leq d$)
\begin{eqnarray*}
h'_i(\Delta)=h_i(\Delta) &+&
\binom{d}{i}\sum_{j=1}^{i-1}(-1)^{i-j-1}\beta_{j-1}(\Delta)\\
&+&\binom{d-1}{i}\left[\bK_{i-1}(\Delta)-
\sum_{v\in V} \sum_{j=1}^{i-2}(-1)^{i-j}\beta_{j-1}(\lk v)\right].
\end{eqnarray*}
\end{theorem}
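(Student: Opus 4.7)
The plan is to adapt Schenzel's derivation of his formula~(\ref{Sch-eq}) for Buchsbaum complexes by feeding in the additional contributions from the strictly negative-degree strands of $H^j_\m(\kk[\Delta])$, which are the genuinely new feature of the homologically isolated setting beyond the Buchsbaum case. Setting $M_0 = \kk[\Delta]$ and $M_i = M_{i-1}/\theta_i M_{i-1}$, the iterated short exact sequences
$$
0 \to (0:_{M_{i-1}}\theta_i) \to M_{i-1}(-1) \xrightarrow{\theta_i} M_{i-1} \to M_i \to 0
$$
yield
$$
F(\kk(\Delta,\Theta),\lambda) = (1-\lambda)^d F(\kk[\Delta],\lambda) + \sum_{i=1}^{d} \lambda(1-\lambda)^{d-i}\, F\bigl((0:_{M_{i-1}}\theta_i),\lambda\bigr),
$$
reducing the task to computing the Hilbert series of the successive kernel modules, which is where the local cohomology enters through the long exact sequence associated to multiplication by each $\theta_i$.

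Next, analyze these kernels using Gr\"abe's Theorem~\ref{Grabe}. The crucial input is the $S$-module structure: for a linear form $\theta=\sum_v a_v x_v$ with all $a_v\ne 0$, multiplication by $\theta$ on $H^{j+1}_\m(\kk[\Delta])$ acts coordinatewise between any two adjacent strictly-negative degrees and is therefore an isomorphism there, while the transition from degree $-1$ to degree $0$ is precisely the map $f^{j,\theta}$ whose kernel and cokernel are $\bK_j$ and $\bC_j$ by the homologically isolated assumption. Consequently, after one multiplication by $\theta_1$ the infinite negative-degree tail of each $H^{j+1}_\m$ collapses into a single degree-$0$ contribution of dimension $\bK_j$ (plus a cokernel piece of dimension $\bC_j$), and subsequent multiplications by the remaining $\theta_i$'s interact with the quotient exactly as in the Buchsbaum case. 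Assembling these computations produces the stated formula: the degree-$0$ pieces of $H^j_\m(\kk[\Delta])$ account for Schenzel's summand $\binom{d}{i}\sum_{j=1}^{i-1}(-1)^{i-j-1}\beta_{j-1}(\Delta)$, while the negative-degree pieces yield the new summand $\binom{d-1}{i}\bigl[\bK_{i-1}-\sum_{v\in V}\sum_{j=1}^{i-2}(-1)^{i-j}\beta_{j-1}(\lk v)\bigr]$, with the binomial coefficient dropping from $\binom{d}{i}$ to $\binom{d-1}{i}$ because the negative-degree strands are consumed at the very first step of the recursion, losing one $(1-\lambda)$-iteration. Independence of the l.s.o.p.~$\Theta$ is automatic for generic $\Theta$ (all $a_v\ne 0$) by the definition of homologically isolated singularities, and extends to arbitrary l.s.o.p.s by an upper-semicontinuity argument since the right-hand side depends only on intrinsic invariants of $\Delta$.

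The main obstacle I anticipate is the signed bookkeeping in the second step: correctly propagating the local cohomology data --- a kernel-and-cokernel pair at each cohomological level $j$ --- through the alternating $(1-\lambda)^{d-i}\lambda$ factors from the first step, so that the binomials $\binom{d}{i}$ and $\binom{d-1}{i}$ together with the precise alternating signs $(-1)^{i-j-1}$ and $(-1)^{i-j}$ emerge as stated. Verifying the computation against the Buchsbaum case (where $\bK_j\equiv 0$, all link terms vanish, and Schenzel's formula is recovered) and against a complex with a single isolated singularity will be essential sanity checks to catch off-by-one errors in the indexing.
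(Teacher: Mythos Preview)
Your approach is essentially the paper's: both start from the iterated kernel identity
\[
F(\kk(\Delta,\Theta),\lambda)=\sum_i h_i\lambda^i+\sum_{s=1}^{d-1}\lambda(1-\lambda)^{d-s-1}\,F\bigl((0:_{\kk[\Delta]\{s\}}\theta_{s+1}),\lambda\bigr),
\]
analyze the kernels via Gr\"abe's description and the long exact sequence in local cohomology, and exploit that once one divides by a single $\theta_1$ with all coefficients nonzero the quotient $\kk[\Delta]/(\theta_1)$ is Buchsbaum (the paper records this as Lemma~\ref{Buchsbaum}(2)), so Schenzel's machinery applies thereafter. The paper then tracks $\dim_\kk H^i_\m(\kk[\Delta]\{s\})_j$ inductively (Lemma~\ref{Hilb-cohomology}) and sums; your narrative of the negative-degree tail ``collapsing'' at the first step, producing $\binom{d-1}{i}$ in place of $\binom{d}{i}$, is precisely this computation.

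One correction: your closing semicontinuity argument for passing from generic $\Theta$ to arbitrary l.s.o.p.s does not work as stated, since upper-semicontinuity of the Hilbert function gives only an inequality in one direction. The paper handles this more simply and directly: the linear span of \emph{any} l.s.o.p.\ contains a $1$-form with all coefficients nonzero, so without changing the ideal $(\theta_1,\ldots,\theta_d)$ one may assume $\theta_1$ itself is such a form, and the computation then goes through for every $\Theta$ with no genericity hypothesis at all.
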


The crucial part of the proof is the following
property of the face rings of complexes
with homologically isolated singularities that was observed in
\cite[Section 4]{MNS}.
\begin{lemma}  \label{Buchsbaum}
Let $\Delta$ be a $(d-1)$-dimensional
simplicial complex with isolated singularities and let
$\theta=\sum_{v\in V}a_vx_v$ be a $1$-form with all  coefficients
non-vanishing. Then
\begin{enumerate}
\item The Hilbert series of the $i$-th local cohomology of
$\kk[\Delta]/(\theta)$, $F(H^i_\m(\kk[\Delta]/(\theta)), \lambda)$,
equals $\beta_i(\Delta)\lambda +
 (\bK^\theta_i(\Delta)+\bC^\theta_{i-1}(\Delta))$ for all $i<d-1$.
\item If $\Delta$ has homologically isolated singularities,
then $\kk[\Delta]/(\theta)$ is a Buchsbaum $S$-module. \end{enumerate}
\end{lemma}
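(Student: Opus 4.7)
The plan is to observe that $\theta$ is a non-zerodivisor on $M := \kk[\Delta]$, then extract both statements from the long exact sequence in local cohomology, using Gr\"abe's Theorem~\ref{Grabe} to control $H^j_\m(M)$ and the multiplication-by-$\theta$ map. For non-zerodivision: since $\kk[\Delta] = S/I_\Delta$ is reduced, its associated primes are exactly the minimal primes $\pp_F = (x_v : v \notin F)$ indexed by facets $F$, and for each such $F$, picking any $v \in F$ shows that the summand $a_v x_v$ of $\theta$ lies outside $\pp_F$, so $\theta \notin \pp_F$. Hence
\[
0 \to M(-1) \xrightarrow{\cdot \theta} M \to \bar M \to 0 \qquad (\bar M := M/\theta M)
\]
is exact, producing the long exact sequence
\[
\cdots \to H^i_\m(M)(-1) \xrightarrow{\cdot \theta} H^i_\m(M) \to H^i_\m(\bar M) \to H^{i+1}_\m(M)(-1) \xrightarrow{\cdot \theta} H^{i+1}_\m(M) \to \cdots.
\]

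By Theorem~\ref{Grabe}, for $j < d$ the module $H^j_\m(M)$ is concentrated in degrees $\leq 0$; summing Gr\"abe's description of $\cdot x_w$ weighted by $a_w$ shows that $\cdot \theta : H^j_\m(M)_{-k-1} \to H^j_\m(M)_{-k}$ is the diagonal isomorphism $(s_v) \mapsto (a_v s_v)$ for $k \geq 1$, and equals $f^{j-1,\theta}$ for $k = 0$. Therefore on $\cdot\theta : H^j_\m(M)(-1) \to H^j_\m(M)$ the cokernel is concentrated in degree $0$ with dimension $\bC^\theta_{j-1}(\Delta)$, while the kernel has dimension $\bK^\theta_{j-1}(\Delta)$ in degree $0$ and dimension $\beta_{j-1}(\Delta)$ in degree $1$ (the latter from the trivial map $(H^j_\m(M))_0 \to (H^j_\m(M))_1 = 0$).

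For (1), restricting to $i < d-1$ lets Gr\"abe apply to both $H^i_\m(M)$ and $H^{i+1}_\m(M)$, and the long exact sequence splits into
\[
0 \to \coker\bigl(H^i_\m(M)(-1) \xrightarrow{\cdot\theta} H^i_\m(M)\bigr) \to H^i_\m(\bar M) \to \ker\bigl(H^{i+1}_\m(M)(-1) \xrightarrow{\cdot\theta} H^{i+1}_\m(M)\bigr) \to 0.
\]
Summing the graded-piece dimensions gives $\dim(H^i_\m(\bar M))_0 = \bC^\theta_{i-1} + \bK^\theta_i$ and $\dim(H^i_\m(\bar M))_1 = \beta_i(\Delta)$, producing the claimed Hilbert series.

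For (2), since $H^i_\m(\bar M)$ is supported in degrees $0$ and $1$, it suffices to show $x_w \cdot (H^i_\m(\bar M))_0 = 0$ for every $w \in V$. Any element of $(H^i_\m(\bar M))_0$ is either the image of some $\eta \in (H^i_\m(M))_0$, in which case $x_w \eta \in (H^i_\m(M))_1 = 0$ and $S$-linearity of the map $H^i_\m(M) \to H^i_\m(\bar M)$ kills it, or it is a lift under the connecting homomorphism $\delta$ of some $(s_v) \in \ker f^{i,\theta} \subseteq (H^{i+1}_\m(M))_{-1}$. In the latter case $S$-linearity of $\delta$ yields $\delta(x_w \xi) = \inc^\ast(s_w)$, and this is where the homologically isolated hypothesis is essential: it forces $\ker f^{i,\theta} = \bigoplus_v \ker \inc^\ast_v$, so $s_w \in \ker \inc^\ast_w$ and $\delta(x_w \xi) = 0$; then $x_w \xi$ must lie in the image of $(H^i_\m(M))_1 = 0$ and hence vanishes. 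The main obstacle is precisely this last step: without homologically isolated singularities the kernel of $f^{i,\theta}$ need not split as a direct sum, one cannot conclude $\inc^\ast(s_w) = 0$, and $\bar M$ may fail to be Buchsbaum over $S$.
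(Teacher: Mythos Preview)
Your argument is correct and follows exactly the route the paper sketches: Gr\"abe's description of $H^j_\m(\kk[\Delta])$ plus the long exact sequence in local cohomology for the non-zerodivisor $\theta$ yields part (1), and the homologically-isolated hypothesis forces $\m H^i_\m(\kk[\Delta]/(\theta))=0$, which is what the paper uses (citing \cite[Prop.~I.3.10]{StVo}) to conclude Buchsbaumness. The only thing you leave implicit is that very last implication---that $\m$-annihilation of the intermediate local cohomology over the regular ring $S$ suffices for the Buchsbaum property---so you should cite the criterion rather than simply asserting ``it suffices to show.''
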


\begin{proof} (Sketch) The first part follows from Theorem \ref{Grabe}
together with the long exact sequence in local cohomology. Moreover, the
condition on homologically
isolated singularities implies that $\m H^i(\kk[\Delta]/(\theta))=0$.
The second part then follows from \cite[Prop.~I.3.10]{StVo}.
\end{proof}

A comprehensive reference to Buchsbaum rings and modules is \cite{StVo}.
Here we will need only a few standard properties that we record in the
following lemma.

\begin{lemma} \label{properties}
Let $\Delta$ be a $(d-1)$-dimensional simplicial complex with
homologically isolated singularities and let
$\theta_1, \ldots, \theta_d$ be an l.s.o.p.~for $\field[\Delta]$
with $\theta_1$ having non-vanishing coefficients. Then
for all $1\leq s\leq d-1$,
\begin{enumerate}
\item $\kk[\Delta]\{s\}:=\kk[\Delta]/(\theta_1, \ldots, \theta_s)$ is a
Buchsbaum $S$-module  of Krull dimension $d-s$,
\item   $(0:_{\kk[\Delta]\{s\}}  \theta_{s+1})=H^0(\kk[\Delta]\{s\})$,
and
\item for all $i+s<d-1$, $$H^i_\m(\kk[\Delta]\{s+1\})_j \cong
  H^i_\m(\kk[\Delta]\{s\})_j \oplus
 H^{i+1}_\m(\kk[\Delta]\{s\})_{j-1}.$$
\end{enumerate} \end{lemma}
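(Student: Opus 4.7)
The plan is to establish the lemma by induction on $s$, with the base case Property (1) at $s=1$ being exactly Lemma \ref{Buchsbaum}(2), and with the inductive step deriving Properties (2) and (3) at the current level $s$, as well as Property (1) at level $s+1$, from Property (1) at level $s$. The main engine is standard Buchsbaum module machinery from \cite{StVo}.

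For the inductive step, assume Property (1) holds at some $s$. Since $\theta_1,\ldots,\theta_d$ is an l.s.o.p.~for $\kk[\Delta]$, the tail $\theta_{s+1},\ldots,\theta_d$ is a system of parameters for the $(d-s)$-dimensional Buchsbaum module $M:=\kk[\Delta]\{s\}$, so $\theta:=\theta_{s+1}$ is a parameter on $M$. Property (2) at $s$ is then the standard identity $(0:_M\theta) = H^0_\m(M)$ for a Buchsbaum module and a parameter, and Property (1) at $s+1$ follows from the classical result that the quotient of a Buchsbaum module by a parameter is again Buchsbaum (of Krull dimension one less); both facts are in \cite{StVo}.

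For Property (3) at $s$, I would split the four-term exact sequence
$$0 \to (0:_M\theta)(-1) \to M(-1) \xrightarrow{\ \theta\ } M \to M/\theta M \to 0$$
into the two short exact sequences $0 \to (0:_M\theta)(-1) \to M(-1) \to \theta M \to 0$ and $0 \to \theta M \to M \to M/\theta M \to 0$. Since $(0:_M\theta) = H^0_\m(M)$ is annihilated by $\m$, its higher local cohomology vanishes, so the first sequence gives $H^0_\m(\theta M)=0$ and $H^i_\m(\theta M) \cong H^i_\m(M)(-1)$ for $i\geq 1$. Substituting this into the long exact sequence of the second short exact sequence, the connecting maps become multiplication by $\theta$ on $H^i_\m(M)$; these vanish for $i<\dim M=d-s$ because $\m\cdot H^i_\m(M)=0$ in that range. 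For $i+1<d-s$, equivalently $i+s<d-1$, this produces short exact sequences
$$0 \to H^i_\m(M) \to H^i_\m(M/\theta M) \to H^{i+1}_\m(M)(-1) \to 0$$
which split as graded $\kk$-vector spaces, yielding Property (3).

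The argument is essentially bookkeeping: the deep input is already encoded in Lemma \ref{Buchsbaum} and in the standard theorems on Buchsbaum modules from \cite{StVo}. The main subtlety is tracking the degree-one grading shift induced by $\theta$ carefully enough to recover both the precise range $i+s<d-1$ and the degree shift by $-1$ in the second summand of Property (3).
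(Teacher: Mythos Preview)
Your proof is correct and follows essentially the same route as the paper: both invoke Lemma~\ref{Buchsbaum}(2) for the base case and standard Buchsbaum-module facts from \cite{StVo} for Parts~(1) and~(2). The only difference is cosmetic: for Part~(3) the paper simply cites \cite[Lemma 8.2]{N}, whereas you unpack that lemma by splitting the four-term multiplication sequence and tracking the connecting maps---which is exactly the content of the cited result.
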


\begin{proof}
Part 1 is a consequence of Lemma \ref{Buchsbaum}(2)
above and Corollary 1.11 on p.~65 of \cite{StVo}; Part 2 follows
from Part 1 and Proposition 1.10 on p.~64 of \cite{StVo}; Part 3 is
immediately apparent from
 Part 1 and \cite[Lemma 8.2]{N}.
\end{proof}

\begin{lemma} \label{Hilb-cohomology}
For $\Delta$ and $\theta_1, \ldots, \theta_d$ as in Lemma
\ref{properties}, and for all $s\geq 1$ and $i+s < d$,
$$
\dim_\kk H^i_\m(\kk[\Delta]\{s\})_j=
\binom{s-1}{j-1}\beta_{i+j-1}(\Delta)+
\binom{s-1}{j}(\bK_{i+j}(\Delta)+\bC_{i+j-1}(\Delta)). $$
In particular, for $1\leq s\leq d-1$, $$\dim_\kk(0:_{\kk[\Delta]\{s\}}
\theta_{s+1})_j=
 \binom{s-1}{j-1}\beta_{j-1}(\Delta)+
\binom{s-1}{j}(\bK_{j}(\Delta)+\bC_{j-1}(\Delta)).$$ \end{lemma}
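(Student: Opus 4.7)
The plan is to induct on $s$, with Lemma~\ref{Buchsbaum}(1) handling the base case and part (3) of Lemma~\ref{properties} driving the inductive step; the ``in particular'' clause will then fall out of the $i=0$ specialization combined with Lemma~\ref{properties}(2).

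For $s=1$, Lemma~\ref{Buchsbaum}(1) asserts $F(H^i_\m(\kk[\Delta]\{1\}),\lambda)=\beta_i(\Delta)\lambda+(\bK_i(\Delta)+\bC_{i-1}(\Delta))$ for all $i<d-1$. With the conventions $\binom{0}{0}=1$ and $\binom{0}{k}=0$ for $k\neq 0$, the only surviving terms of the claimed formula at $s=1$ are the $j=0$ piece $(\bK_i+\bC_{i-1})$ and the $j=1$ piece $\beta_i$, matching the two coefficients exactly. For the inductive step, assume the formula holds at level $s$ for all admissible $(i,j)$. Lemma~\ref{properties}(3) applies precisely when $i+s<d-1$, i.e., $i+(s+1)<d$, and furnishes
\begin{equation*}
\dim_\kk H^i_\m(\kk[\Delta]\{s+1\})_j=\dim_\kk H^i_\m(\kk[\Delta]\{s\})_j+\dim_\kk H^{i+1}_\m(\kk[\Delta]\{s\})_{j-1}.
\end{equation*}
Both summands on the right fall within the range covered by the inductive hypothesis (the second needs $(i+1)+s<d$, which is the same as $i+s<d-1$), so substituting yields
\begin{equation*}
\bigl[\tbinom{s-1}{j-1}\beta_{i+j-1}+\tbinom{s-1}{j}(\bK_{i+j}+\bC_{i+j-1})\bigr]+\bigl[\tbinom{s-1}{j-2}\beta_{i+j-1}+\tbinom{s-1}{j-1}(\bK_{i+j}+\bC_{i+j-1})\bigr],
\end{equation*}
and two applications of Pascal's identity $\binom{s-1}{k-1}+\binom{s-1}{k}=\binom{s}{k}$ collapse this to $\binom{s}{j-1}\beta_{i+j-1}+\binom{s}{j}(\bK_{i+j}+\bC_{i+j-1})$, which is the desired formula at level $s+1$.

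For the second statement, Lemma~\ref{properties}(2) identifies $(0:_{\kk[\Delta]\{s\}}\theta_{s+1})$ with $H^0_\m(\kk[\Delta]\{s\})$, so setting $i=0$ in the main formula (valid for $s<d$) produces the stated expression. The whole argument is really just careful binomial bookkeeping on top of the structural input of Lemma~\ref{properties}(3); the only mild obstacle is to check that the range conditions propagate correctly through the induction and that edge values like $j=0$ (where $\binom{s-1}{-1}=0$) behave consistently with the vanishing of negative-degree local cohomology, both of which hold by the stated conventions on binomials.
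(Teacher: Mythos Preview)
Your proof is correct and follows exactly the same approach as the paper: induction on $s$ with Lemma~\ref{Buchsbaum}(1) as the base case, Lemma~\ref{properties}(3) together with Pascal's identity for the inductive step, and Lemma~\ref{properties}(2) specialized to $i=0$ for the ``in particular'' clause. You have simply written out in full the binomial bookkeeping and range checks that the paper leaves implicit.
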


\begin{proof}
We use induction on $s$. For $s=1$ the statement reduces to
Lemma~\ref{Buchsbaum}(1). Part 3 of
Lemma~\ref{properties} together with the binomial identity
$\binom{a}{b}+\binom{a}{b-1}=\binom{a+1}{b}$ gives the inductive step.
The ``in particular''-part then follows from Lemma~\ref{properties}(2).
\end{proof}

We are now in a position to prove Theorem \ref{h'}.

\begin{proof}
Since for any l.s.o.p.~$\Theta=\{\theta_1,\ldots,\theta_d\}$ of
$\kk[\Delta]$, the linear span of $\Theta$ contains a 1-form with
all coefficients non-vanishing, we can assume w.l.o.g.~that $\theta_1$
is such a form. It is a well-known fact (see for instance
\cite[p.~150]{StVo}) that for a $(d-1)$-dimensional
complex $\Delta$ and an l.s.o.p.~$\Theta$,
$$
F(\kk(\Delta, \Theta), \lambda) = \sum_{i=0}^d  h_i(\Delta)\lambda^i +
 \sum_{s=1}^{d-1}\lambda(1-\lambda)^{d-s-1}\cdot
 F(0:_{\kk[\Delta]\{s\}}\theta_{s+1}, \lambda).
$$
Substituting in this equation
the ``in-particular''-part of Lemma \ref{Hilb-cohomology} yields
\begin{eqnarray*}
F(\kk(\Delta, \Theta), \lambda) & = & \sum_{i=0}^d  h_i(\Delta)\lambda^i
\\
& + &
 \mbox{\small $\sum_{s=1}^{d-1} (1-\lambda)^{d-s-1}
 \sum_{j=1}^s \lambda^{j+1}\left[\binom{s-1}{j-1}\beta_{j-1}(\Delta)
 +\binom{s-1}{j}(\bK_j(\Delta)+\bC_{j-1}(\Delta))\right]$.}
\end{eqnarray*}
Comparing the coefficients of $\lambda^i$ on both sides we obtain
\begin{eqnarray*}
h'_i(\Delta)=h_i(\Delta) &+&
 \sum_{j=1}^{i-1}(-1)^{i-j-1}
 \left(\sum_{s=j}^{d-1}
 \binom{d-s-1}{i-j-1}\binom{s-1}{j-1}\right)\beta_{j-1}(\Delta)\\
&+&  \sum_{j=1}^{i-1}(-1)^{i-j-1}
 \left(\sum_{s=j+1}^{d-1}
 \binom{d-s-1}{i-j-1}\binom{s-1}{j}\right)
    (\bK_{j}(\Delta)+\bC_{j-1}(\Delta)).
\end{eqnarray*}
Using that $\sum_{r=0}^l\binom{l-r}{i-j}\binom{r}{j}=\binom{l+1}{i+1}$
then implies
\begin{equation}  \label{bCK}
h'_i(\Delta)=h_i(\Delta)+
\sum_{j=1}^{i-1}(-1)^{i-j-1}\left[\binom{d-1}{i-1}\beta_{j-1}(\Delta)
+\binom{d-1}{i}(\bK_j(\Delta)+\bC_{j-1}(\Delta))\right].
\end{equation}
Finally, it follows from exactness of
$$
0 \to \ker(f^{j, \theta}) \to \bigoplus_{v\in V} H^j_{\{v\}}(\Delta)
 \stackrel{f^{j, \theta}}{\to} H^j_{\emptyset}(\Delta)
 \to \coker(f^{j,\theta}) \to 0
$$
that \begin{equation}  \label{bCtobK}
\bC_j(\Delta)=\bK_j(\Delta)+\beta_j(\Delta)-\sum_v \beta_{j-1}(\lk v).
\end{equation}
We infer the theorem by
substituting these expressions in Eq.~(\ref{bCK}) and using the identity
$\binom{d-1}{i-1}+\binom{d-1}{i}=\binom{d}{i}$.
\end{proof}

\begin{corollary} \label{socle}
Let $\Delta$ be a $(d-1)$-dimensional simplicial complex with
homologically isolated singularities, and let
$\Theta=\{\theta_1,\ldots,\theta_d\}$
be an l.s.o.p.~for $\kk[\Delta]$.
Then for all $i< d-1$,
 $$ \dim\soc(\kk(\Delta, \Theta))_i\geq
 \binom{d}{i}\beta_{i-1}(\Delta)+
 \binom{d-1}{i}(\bK_i(\Delta)+\bK_{i-1}(\Delta)-
            \sum_{v\in V}\beta_{i-2}(\lk v)),
$$
and hence $h'_i(\Delta)\geq \binom{d}{i}\beta_{i-1}(\Delta)+
 \binom{d-1}{i}(\bK_i(\Delta)+\bK_{i-1}(\Delta)-
            \sum_{v\in V}\beta_{i-2}(\lk v))$.
\end{corollary}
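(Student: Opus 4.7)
The plan is to track how socle elements accumulate along the tower of Artinian reductions $M_s := \kk[\Delta]\{s\} = \kk[\Delta]/(\theta_1,\ldots,\theta_s)$, producing enough socle elements in $M_d = \kk(\Delta,\Theta)$ to match the stated bound. Without loss of generality I assume $\theta_1$ has all nonzero coefficients, so that by Lemma~\ref{Buchsbaum}(2) and Lemma~\ref{properties}(1), each $M_s$ with $1 \leq s \leq d-1$ is a Buchsbaum $S$-module of Krull dimension $d-s \geq 1$. Consequently, $\soc(M_s) = H^0_\m(M_s)$ for such $s$, because $H^0_\m$ of a Buchsbaum module of positive depth is already $\m$-annihilated.

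At each intermediate step $s \leq d-2$, Lemma~\ref{properties}(3) identifies $\dim\soc(M_{s+1})_i = \dim H^0_\m(M_s)_i + \dim H^1_\m(M_s)_{i-1}$, so the socle grows exactly by the shifted top local cohomology. Moreover, the quotient map $M_s \to M_{s+1}$ is injective on socles: any nonzero $\nu \in \soc(M_s) \cap \theta_{s+1}M_s$ would give a nonzero $\theta_{s+1}$-torsion element in $M_s/H^0_\m(M_s)$, contradicting that $\theta_{s+1}$ acts as a non-zero-divisor on that depth-positive quotient. The same injectivity argument applies to the last passage $M_{d-1} \to M_d$, so the embedding $H^0_\m(M_{d-1}) \hookrightarrow \soc(M_d)$ accounts for $\binom{d-2}{i-1}\beta_{i-1}(\Delta) + \binom{d-2}{i}(\bK_i + \bC_{i-1})$ dimensions of $\soc(M_d)_i$, via Lemma~\ref{Hilb-cohomology}.

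The main obstacle is producing the missing $\binom{d-2}{i-2}\beta_{i-1}(\Delta) + \binom{d-2}{i-1}(\bK_i + \bC_{i-1})$ socle elements needed to close the bound. For this I would analyze the long exact sequence in local cohomology associated with the four-term exact sequence $0 \to H^0_\m(M_{d-1})(-1) \to M_{d-1}(-1) \xrightarrow{\theta_d} M_{d-1} \to M_d \to 0$; splitting through $M' := M_{d-1}/H^0_\m(M_{d-1})$ (on which $\theta_d$ is a non-zero-divisor) and using that $H^0_\m(M') = 0$ and $H^1_\m(M') = H^1_\m(M_{d-1})$ gives
\[
0 \to H^0_\m(M_{d-1}) \to M_d \to H^1_\m(M_{d-1})(-1) \xrightarrow{\cdot\theta_d} H^1_\m(M_{d-1}) \to 0.
\]
The hardest step is showing that each class in $\ker(\theta_d \,|\, H^1_\m(M_{d-1})_{i-1})$ lifts, via the connecting homomorphism, to an element $\bar\mu \in (M_d)_i$ satisfying $\m\bar\mu = 0$; equivalently, that a preimage $\mu \in (M_{d-1})_i$ can be selected with $x_v\mu \in \theta_d M_{d-1}$ for every $v$. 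I expect this to follow from the explicit \v Cech/cocycle description of the connecting map together with the Buchsbaum structure of $M_{d-1}$, which makes the action of $x_v$ on the relevant portion of $H^1_\m(M_{d-1})$ transparent.

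Combining the two contributions and applying Pascal's identity $\binom{d-2}{i-1}+\binom{d-2}{i-2}=\binom{d-1}{i-1}$ (and its analog for $\binom{d-1}{i}$) gives $\dim\soc(\kk(\Delta,\Theta))_i \geq \binom{d-1}{i-1}\beta_{i-1}(\Delta) + \binom{d-1}{i}(\bK_i + \bC_{i-1})$. A final rewriting via $\binom{d}{i} = \binom{d-1}{i-1} + \binom{d-1}{i}$ and Eq.~(\ref{bCtobK}), $\bC_{i-1}(\Delta) = \bK_{i-1}(\Delta) + \beta_{i-1}(\Delta) - \sum_v \beta_{i-2}(\lk v)$, rearranges the expression into the stated form $\binom{d}{i}\beta_{i-1}(\Delta) + \binom{d-1}{i}\bigl(\bK_i(\Delta) + \bK_{i-1}(\Delta) - \sum_v \beta_{i-2}(\lk v)\bigr)$. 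The inequality $h'_i(\Delta) \geq$ the same right-hand side is immediate, since $\soc(\kk(\Delta,\Theta))_i$ sits inside $\kk(\Delta,\Theta)_i$, whose dimension is $h'_i(\Delta)$ by Theorem~\ref{h'}.
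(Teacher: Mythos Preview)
Your derivation of the exact sequence $0\to H^0_\m(M_{d-1})\to M_d\to \ker\bigl(\theta_d\colon H^1_\m(M_{d-1})(-1)\to H^1_\m(M_{d-1})\bigr)\to 0$ is correct, as is the final rewriting via Pascal's identity and Eq.~(\ref{bCtobK}). The problem is the step you yourself flag as hardest: it is not merely unproven but, as you have formulated it, false. Because $H^0_\m(M_{d-1})$ is already $\m$-annihilated, any two lifts $\mu,\mu'\in(M_d)_i$ of the same kernel class satisfy $x_v\mu=x_v\mu'$ for every $v$; hence a given kernel class lifts to $\soc(M_d)_i$ if and only if \emph{all} of its lifts lie there. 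Your assertion that \emph{each} class in $\ker(\theta_d)_{i-1}$ lifts to a socle element would therefore force $(M_d)_i=\soc(M_d)_i$ for every $i<d-1$, i.e.\ $\m\cdot\kk(\Delta,\Theta)_i=0$, which already fails for the boundary of a simplex. Note also that $H^1_\m(M_{d-1})$ is the \emph{top} local cohomology of a Krull-dimension-one module, so the Buchsbaum hypothesis gives no control over its $\m$-action, and the \v{C}ech description you allude to provides no obvious substitute.

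The paper's proof avoids this entirely. Since $\kk[\Delta]/(\theta_1)$ is Buchsbaum of Krull dimension $d-1$ by Lemma~\ref{Buchsbaum}(2), one applies \cite[Theorem~2.2]{NS-socle} directly to obtain
\[
\dim\soc(\kk(\Delta,\Theta))_i \ \ge\ \sum_{j=0}^{d-2}\binom{d-1}{j}\dim_\kk H^j_\m\bigl(\kk[\Delta]/(\theta_1)\bigr)_{i-j},
\]
and then evaluates the right-hand side for $i<d-1$ via Lemma~\ref{Buchsbaum}(1) as $\binom{d-1}{i-1}\beta_{i-1}(\Delta)+\binom{d-1}{i}(\bK_i(\Delta)+\bC_{i-1}(\Delta))$. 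What your tower argument is missing is precisely the content of that cited theorem: the mechanism by which the intermediate local cohomology of a Buchsbaum module feeds the socle of its Artinian reduction is subtler than ``lift every kernel class to a socle element,'' and your sketch does not supply a replacement.
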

\begin{proof}
Since by Lemma \ref{Buchsbaum}(2), $\kk[\Delta]/(\theta_1)$ is a
Buchsbaum ring of Krull dimension $d-1$, Theorem 2.2 of \cite{NS-socle}
applies and gives
$$
\dim\soc(\kk(\Delta, \Theta))_i
\geq \sum_{j=0}^{d-2}\binom{d-1}{j}\dim_{\kk}
H^j_\m(\kk[\Delta]/(\theta_1))_{i-j}.
$$
By Lemma \ref{Buchsbaum}(1), for $i<d-1$ the right-hand side
of this inequality is equal to $\binom{d-1}{i-1}\beta_{i-1}(\Delta)+
 \binom{d-1}{i}(\bK_i(\Delta)+\bC_{i-1}(\Delta))$.
The result follows from Eq.~(\ref{bCtobK}).
\end{proof}

Note that if $\Delta$ is a Buchsbaum complex, then for $i<d-1$ and $v\in
V$, $\beta_{i-1}(\lk v)=\bK_i(\Delta)=0$.
Thus Theorem \ref{h'} reduces to Schenzel's theorem
and Corollary \ref{socle} to \cite[Theorem 3.4]{NS-socle}.
We also remark that Theorem \ref{h'} can be restated in terms of
singular homology.

\begin{theorem} \label{h'-version2}
Let $\Delta$ be a $(d-1)$-dimensional simplicial complex with
isolated singularities, $X=\|\Delta\|$ its geometric realization,
$\Sigma\subset X$ the set of singularities of $X$, and $\theta=\sum_{v\in
V} x_v$.   Then for all $j<d$, $
\bK^\theta_j(\Delta)+\bC^\theta_{j-1}(\Delta)
=\dim_\kk\tilde{H}_{j-1}(X-\Sigma; \kk)$.
Moreover, if $\Delta$ has homologically isolated singularities
and $\Theta$ is any l.s.o.p.~for $\kk[\Delta]$, then for $0\leq i\leq d$,
$$
h'_i(\Delta)=h_i(\Delta)+ \sum_{j=1}^{i-1}(-1)^{i-j-1} \left[
\binom{d-1}{i-1}\dim_\kk \tilde{H}_{j-1}(X; \kk)
+\binom{d-1}{i}\dim_\kk \tilde{H}_{j-1}(X-\Sigma; \kk)\right].
$$
\end{theorem}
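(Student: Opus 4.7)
The plan is to establish Part 1 via the long exact sequence of the pair $(X,X-\Sigma)$ together with excision, and then derive the ``Moreover'' formula by substituting Part 1 and Eq.~(\ref{bCtobK}) into Theorem \ref{h'}.

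For Part 1, I would choose pairwise disjoint contractible open neighborhoods $N_v$ of the points $v\in\Sigma$ for which $N_v\setminus\{v\}$ deformation retracts onto $\|\lk v\|$; such neighborhoods exist because the singularities of $X$ are isolated. Excision together with the cone isomorphism gives
\[
H_j(X,X-\Sigma;\kk) \;\cong\; \bigoplus_{v\in\Sigma}\tilde H_{j-1}(\lk v;\kk).
\]
Since $\kk$ is a field, the natural homology map $H_j(X)\to H_j(X,X-\Sigma)$ is the $\kk$-dual of the cohomology map $H^j(X,X-\Sigma)\to H^j(X)$, which under the same excision and cone isomorphisms becomes precisely $f^{j,\theta}$ for $\theta=\sum_{v\in V}x_v$: each component $H^j(\Delta,\cost v)\to H^j(\Delta)$ is the map $\inc^\ast$ appearing in Gr\"abe's description, and the unit coefficients in $\theta$ supply the natural (unweighted) sum. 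Dualizing over $\kk$ therefore gives $\bK^\theta_j = \dim\coker(H_j(X)\to H_j(X,X-\Sigma))$ and $\bC^\theta_{j-1} = \dim\ker(H_{j-1}(X)\to H_{j-1}(X,X-\Sigma))$, and the short exact sequence
\[
0 \to \coker\bigl(H_j(X)\to H_j(X,X-\Sigma)\bigr) \to \tilde H_{j-1}(X-\Sigma) \to \ker\bigl(H_{j-1}(X)\to H_{j-1}(X,X-\Sigma)\bigr) \to 0
\]
extracted from the long exact sequence of the pair yields Part 1 by summing dimensions.

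For the ``Moreover'' statement, I would start from Theorem \ref{h'}, split $\binom{d}{i} = \binom{d-1}{i-1}+\binom{d-1}{i}$, and substitute both Part 1 and the identity $\bC_{j-1} = \bK_{j-1}+\beta_{j-1}(\Delta)-\sum_v\beta_{j-2}(\lk v)$ from Eq.~(\ref{bCtobK}) into the resulting expression. The alternating sum $\sum_{j=1}^{i-1}(-1)^{i-j-1}(\bK_j+\bK_{j-1})$ telescopes---using $\bK_0=0$---to precisely $\bK_{i-1}$, which cancels the $\bK_{i-1}$ already present in Theorem \ref{h'}; the remaining $\beta_{j-1}(\Delta)$ and $\sum_v\beta_{j-2}(\lk v)$ contributions reorganize into $\binom{d-1}{i-1}\sum_j(-1)^{i-j-1}\dim\tilde H_{j-1}(X;\kk)$ together with $\binom{d-1}{i}\sum_j(-1)^{i-j-1}\dim\tilde H_{j-1}(X-\Sigma;\kk)$, matching the claim.

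The main obstacle is the map-level identification in Part 1: confirming, via Gr\"abe's description of $\inc^\ast$ together with the standard excision and cone isomorphisms, that the dual of $f^{j,\theta}$ is exactly the long-exact-sequence map $H_j(X)\to H_j(X,X-\Sigma)$. The nonzero-coefficient hypothesis on $\theta$ only rescales the source components of $f^{j,\theta}$, so the dimensions of kernel and cokernel are independent of the specific choice; the canonical choice $\theta=\sum_{v\in V}x_v$ aligns precisely with the natural topological sum, making the identification clean at every level.
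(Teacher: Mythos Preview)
Your proposal is correct and follows essentially the same approach as the paper for Part~1: both arguments use the long exact sequence of the pair $(X,X-\Sigma)$, identify $H^j(X,X-\Sigma)$ with $\bigoplus_v H^j_{\{v\}}(\Delta)$ via excision and the deformation retraction of $X-v$ onto $\|\cost v\|$, and read off $\bK^\theta_j$ and $\bC^\theta_{j-1}$ as the kernel and cokernel dimensions of the relevant map. The only cosmetic difference is that the paper works directly in cohomology, whereas you pass through homology and dualize---harmless over a field.

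For the ``Moreover'' statement, there is a small divergence worth noting. The paper simply substitutes Part~1 into the intermediate identity Eq.~(\ref{bCK}), which already has exactly the shape
\[
h'_i - h_i = \sum_{j=1}^{i-1}(-1)^{i-j-1}\left[\tbinom{d-1}{i-1}\beta_{j-1}(\Delta)+\tbinom{d-1}{i}(\bK_j+\bC_{j-1})\right],
\]
so the conclusion is immediate. You instead start from the \emph{final} form of Theorem~\ref{h'} (obtained from Eq.~(\ref{bCK}) by applying Eq.~(\ref{bCtobK})) and must then re-expand via Eq.~(\ref{bCtobK}) and telescope $\sum_j(-1)^{i-j-1}(\bK_j+\bK_{j-1})$ back down to $\bK_{i-1}$. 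This works, but it is a detour: you are effectively undoing and redoing the last step of the proof of Theorem~\ref{h'}. Citing Eq.~(\ref{bCK}) directly would save you the telescoping.
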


\begin{proof} The second assertion follows the first one and
Eq.~(\ref{bCK}). To prove the first assertion consider
the following long cohomology sequence of the pair (where we omit $\kk$
to simplify the notation)
$$
H^{j-1}(X,X-\Sigma) \stackrel{\psi}{\to} \tilde{H}^{j-1}(X)
\to \tilde{H}^{j-1}(X-\Sigma)\to H^j(X,X-\Sigma)
\stackrel{\phi}{\to} \tilde{H}^j(X).
$$
It implies that $\dim_\kk \tilde{H}_{j-1}(X-\Sigma)
=\dim_\kk \tilde{H}^{j-1}(X-\Sigma)=\dim_\kk \coker(\psi)
+\dim_\kk \ker(\phi)$. Thus to derive the first assertion it suffices to
show that
 $\dim_\kk\coker(\psi)=\bC^\theta_{j-1}(\Delta)$ and
$\dim_\kk\ker(\phi)=\bK^\theta_{j}(\Delta)$. This is immediate
from the following series of canonical isomorphisms:
$$H^{j}(X,X-\Sigma) \cong \bigoplus_{v\in\Sigma} H^j(X, X-v)
\cong \bigoplus_{v\in V} H^j(X, X-v) \cong \bigoplus_{v\in V}
H^j_{\{v\}}(\Delta) \quad \mbox{for $j<d-1$}.
$$
The first isomorphism in this series follows, for instance,
by induction on the
size of $\Sigma$ using the relative form
of the Mayer-Vietoris sequence for the inductive step. The last
isomorphism is implied by the fact that $\|\cost v\|$ is a deformation
retract of $X-v$. The middle one is immediate from
the definition of a non-singular vertex: for such a vertex $v$ and
$j<d-1$, $H^{j}(X,X-v)\cong \tilde{H}^{j-1}(\lk v)=0$.
\end{proof}

\begin{remark}
When $X=\|\Delta\|$ is a {\it pseudomanifold} with isolated
singularities,
$\tilde{H}_{j-1}(X-\Sigma; \kk)$ coincides with
$H^{(\overline{0})}_{j-1}(X; \kk)$ --- the $(j-1)$-st intersection
homology of perversity $\overline{0}$. (Intersection homology was
introduced and studied
 in \cite{GorMac}.) Thus for pseudomanifolds
with homologically isolated singularities,
 $h'_i(\Delta)-h(\Delta)_i$ can be written
as a linear combination of the usual Betti numbers of $\Delta$
and the intersection Betti numbers of $\|\Delta\|$. \end{remark}

We now turn to numerical consequences of Theorem \ref{h'}.
Let $g_2(\Delta):=h_2(\Delta)-h_1(\Delta)$.
The celebrated Lower Bound Theorem \cite{Bar, Ka} asserts that if
$\Delta$ is a connected $\kk$-homology manifold of dimension $d-1 \geq
2$, then $g_2(\Delta)\geq 0$. Moreover, it was shown in \cite[Theorem
5.3]{NS-socle} that if $\Delta$ is a connected
orientable $\kk$-homology manifold of dimension $d-1\geq 3$, then
$g_2(\Delta)\geq\binom{d+1}{2}\beta_1(\Delta)$. Here we extend this
result to pseudomanifolds with homologically
isolated singularities. (For {\em normal} pseudomanifolds,
that is, pseudomanifolds all of whose faces of codimension at least two
have connected links, it is a result of Fogelsanger \cite{Fog}
that the lower bound $g_2\geq 0$ continues to hold.)

\begin{theorem} \label{LBT}
Let $\Delta$ be a $(d-1)$-dimensional pseudomanifold with homologically
isolated singularities. If $d\geq 5$, then
$$
g_2(\Delta)
\geq \binom{d+1}{2}\left[\beta_{d-2}(\Delta)-\beta_{d-1}(\Delta)+1\right]
+d \cdot\bK_{d-2}(\Delta)-
d\sum_{v\in V}\left[\beta_{d-3}(\lk v)-\beta_{d-2}(\lk v)+1\right].
$$
The same assertion also holds if $d=4$ and $\Delta$ has at most 5
singularities.
\end{theorem}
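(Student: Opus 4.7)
The plan is to reduce the lower bound on $g_2(\Delta)=h_2-h_1$ to a lower bound on $h_{d-2}-h_{d-1}$ via Theorem~\ref{gen-DS}, and then establish that latter bound by combining the socle estimate of Corollary~\ref{socle} with the baseline $g_2(\Delta)\geq 0$ coming from Fogelsanger's theorem for normal pseudomanifolds (the normality hypothesis is satisfied because pseudomanifolds with isolated singularities have $\field$-homology sphere edge links, hence connected links in all codimensions $\geq 2$ as soon as $d\geq 4$).

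The first step applies Theorem~\ref{gen-DS} at $i=1$ and $i=2$ and subtracts; a direct computation yields
\[
g_2(\Delta)=(h_{d-2}-h_{d-1})+\binom{d+1}{2}\bigl(1+(-1)^d\tilde\chi(\Delta)\bigr)-d\sum_{v\in V}\bigl(1+(-1)^{d-1}\tilde\chi(\lk v)\bigr).
\]
Expanding each Euler characteristic by the Euler--Poincar\'e identity $\tilde\chi=\sum_i(-1)^i\beta_i$ isolates the top Betti numbers $\beta_{d-1}(\Delta),\beta_{d-2}(\Delta)$ and $\beta_{d-2}(\lk v),\beta_{d-3}(\lk v)$; these reassemble into the bracketed factors $\beta_{d-2}-\beta_{d-1}+1$ and $\beta_{d-3}(\lk v)-\beta_{d-2}(\lk v)+1$ appearing in the target bound. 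What remains is to show that $h_{d-2}-h_{d-1}$, corrected by the leftover lower-order Betti contributions, dominates $d\cdot\bK_{d-2}(\Delta)$.

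The second step bounds $h_{d-2}-h_{d-1}$ from below. Lemma~\ref{Buchsbaum} gives a linear form $\theta_1$ with $\kk[\Delta]/(\theta_1)$ Buchsbaum, so Corollary~\ref{socle} applies at $i=d-2$ and produces a lower bound on $h'_{d-2}$ containing a term $\binom{d-1}{d-2}\bK_{d-2}=(d-1)\bK_{d-2}$. Coupling this with the exact formulas from Theorem~\ref{h'} for $h'_{d-2}$ and $h'_{d-1}$ - in the latter of which $\bK_{d-2}$ reappears with coefficient $\binom{d-1}{d-1}=1$ - transfers the estimate to $h_{d-2}-h_{d-1}$ with the correct total coefficient $(d-1)+1=d$ in front of $\bK_{d-2}$. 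The nonnegativity of the residual combinatorial term reduces to $g_2\geq 0$ for the underlying normal pseudomanifold, which is Fogelsanger's theorem. Substituting this bound back into the identity of the first step and using the standard binomial identities $\binom{d}{2}+d=\binom{d+1}{2}$ and $\binom{d-1}{k}+\binom{d-1}{k-1}=\binom{d}{k}$ completes the simplification to the stated inequality.

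The main obstacle is the Step-2 bookkeeping: tracking the alternating-sign Schenzel-type corrections in Theorem~\ref{h'} simultaneously against the socle lower bound of Corollary~\ref{socle} and the Euler characteristic expansion from Step~1 is delicate, and the binomial identities must be applied in exactly the right order. The $d=4$ boundary case is genuinely more difficult: the ``Gorenstein-like'' quotient of $\kk(\Delta,\Theta)$ used implicitly in Step~2 has Krull dimension too small for the generic rigidity baseline to apply unconditionally, so $g_2\geq 0$ for the relevant quotient must be extracted by a separate combinatorial argument. The restriction to at most five singular vertices is exactly what allows a direct Whiteley-Kalai style rank count on the rigidity matrix to go through in dimension three.
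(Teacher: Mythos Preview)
Your Step~1 reduction via Theorem~\ref{gen-DS} is correct, and you have correctly identified that the coefficient $d$ in front of $\bK_{d-2}$ should arise as $(d-1)+1$, with the $(d-1)$ coming from Corollary~\ref{socle} and the $1$ from the $\bK_{d-2}$ term in $h'_{d-1}$ via Theorem~\ref{h'}. But Step~2 has a genuine gap: you never produce an inequality linking $h'_{d-2}$ and $h'_{d-1}$. Corollary~\ref{socle} only gives a lower bound on $\dim\soc(\kk(\Delta,\Theta))_{d-2}$ (and hence on $h'_{d-2}$), while Theorem~\ref{h'} gives an \emph{equality} for $h'_{d-1}$ in terms of $h_{d-1}$. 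To bound $h_{d-2}-h_{d-1}$ from below you need an \emph{upper} bound on $h'_{d-1}$ in terms of $h'_{d-2}$, and neither tool supplies one. Your appeal to Fogelsanger's $g_2(\Delta)\geq 0$ as the ``residual term'' is circular: you are trying to prove $g_2(\Delta)\geq\text{RHS}$, and if the leftover reduces to $g_2(\Delta)\geq 0$ you have only shown $\text{RHS}\leq g_2(\Delta)$ when $\text{RHS}\leq 0$, which is false in general (already for orientable homology manifolds the RHS is $\binom{d+1}{2}\beta_1\geq 0$).

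The missing ingredient is the surjectivity of $\cdot\,\omega:\kk(\Delta,\Theta)_{d-2}\to\kk(\Delta,\Theta)_{d-1}$ for generic $\omega$. This weak Lefschetz statement yields $h'_{d-2}-\dim\soc(\kk(\Delta,\Theta))_{d-2}\geq h'_{d-1}$, which is exactly the bridge you lack. For $d\geq 5$ the paper obtains this surjectivity from \cite[Theorem~4.26]{Sw09}: the links of $(d-4)$-faces are $2$-dimensional homology spheres, hence boundaries of simplicial $3$-polytopes, and for these the map $\cdot\,\omega:\kk(\lk F)_1\to\kk(\lk F)_2$ is an isomorphism. Fogelsanger's theorem plays no role here. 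For $d=4$ the surjectivity is Lemma~\ref{onto}: one restricts to the induced subcomplex $\Delta_U$ on the singular vertices and uses the snake lemma on the short exact sequence $0\to I\to\kk(\Delta,\Theta)\to\kk(\Delta_U,\Theta)\to 0$; the bound $|U|\leq 5$ guarantees $\kk(\Delta_U,\Theta)_3$ is either zero or agrees with that of $\partial\Delta^4$, not a rigidity-matrix rank count as you describe. Once you have $h'_{d-2}-\dim\soc_{d-2}\geq h'_{d-1}$, the rest of your bookkeeping (Theorems~\ref{h'} and~\ref{gen-DS}, Corollary~\ref{socle}, and the binomial identities) goes through exactly as the paper does it.
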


If $\Delta$ is a connected orientable $\kk$-homology manifold, then
$\beta_{d-1}(\Delta)=1$ and by Poincar\'e duality
$\beta_{d-2}(\Delta)=\beta_1(\Delta)$; also
for every vertex $v$, $\beta_{d-2}(\lk v)=1$ while $\beta_{d-3}(\lk
v)=0$.
Thus in this case Theorem \ref{LBT} is equivalent to the inequality
$g_2(\Delta)\geq\binom{d+1}{2}\beta_1(\Delta)$ mentioned above.  In
general the right-hand side is a topological invariant of $\Delta$ as all
of the nonsingular vertices contribute zero to the last term.

\begin{proof} First assume $d\geq 5$. Since $\Delta$ is a
$(d-1)$-dimensional
pseudomanifold with isolated singularities, it follows that the link of
every $(d-4)$-face $F$ is a 2-dimensional homology sphere,
and hence the boundary of a 3-dimensional simplicial polytope. Thus for a
generic $1$-form $\omega$, multiplication by $\omega$ considered as a
linear map from $\kk(\lk F)_1$ to $\kk(\lk F)_2$,
where $\kk(\lk F)$ is an Artinian reduction of $\kk[\lk F]$ by
a generic l.s.o.p., is an
isomorphism. (In characteristic 0 this result was proved in \cite{St80};
for a field of arbitrary characteristic it follows from Murai's paper
\cite{Mur} combined with Whiteley's proof that 2-dimensional spheres are
strongly edge decomposable \cite{Whit}.) The proof of Theorem 4.26 in
\cite{Sw09} then implies that if $\Theta$ is a generic
l.s.o.p.~of  $\kk[\Delta]$ and $\omega\in\kk[\Delta]$ is a generic
1-form, then the map
$$  \cdot\omega: \kk(\Delta, \Theta)_{d-2}\to \kk(\Delta, \Theta)_{d-1},
$$ is {\bf surjective}.
As the socle, $\soc(\kk(\Delta,\Theta))_{d-2}$, is in the kernel of this
map, we obtain that
\begin{equation} \label{surj}
 h'_{d-2}(\Delta) - \dim_{\kk}\soc(\kk(\Delta,\Theta))_{d-2} \geq
 h'_{d-1}(\Delta).
\end{equation}

If $d=4$ and $\Delta$ has at most 5 singular vertices, then we appeal to
Lemma \ref{onto} in Section~\ref{Examples} below to see that
Eq.~(\ref{surj}) holds in this case as well.

Since $\Delta$ has homologically isolated singularities,
Theorem~\ref{h'}  implies that
\begin{eqnarray}  \nonumber
h'_{d-1}(\Delta) &=&
h_{d-1}(\Delta)+
 d\left[\sum_{j=1}^{d-2}(-1)^{(d-1)-j-1}\beta_{j-1}(\Delta)\right]
+ \bK_{d-2}(\Delta) \\
\nonumber & \ & \qquad\qquad
-\sum_{v\in V}\sum_{j=1}^{d-3}(-1)^{(d-1)-j}\beta_{j-1}(\lk v)\\
\nonumber  &=& h_{d-1}(\Delta)+
 d\left[(-1)^{d-1}\tilde{\chi}(\Delta)-
\beta_{d-1}(\Delta)+\beta_{d-2}(\Delta)\right]
 + \bK_{d-2}(\Delta) \\
\nonumber & \ & \qquad\qquad
 -\sum_{v\in V}
 \left[(-1)^{d}\tilde{\chi}(\lk v)-
 \beta_{d-2}(\lk v)+\beta_{d-3}(\lk v)\right]\\
 &=& \label{d-1}
 h_1(\Delta)+
d\left[\beta_{d-2}(\Delta)-\beta_{d-1}(\Delta)+1\right]+\bK_{d-2}(\Delta)\\
\nonumber  & \ & \qquad\qquad   - \sum_{v\in V}\left[\beta_{d-3}(\lk
v)-\beta_{d-2}(\lk v)+1\right],
\end{eqnarray}
where the last step follows from Theorem \ref{gen-DS}.
A similar computation using Theorems \ref{h'} and \ref{gen-DS}, and
Corollary \ref{socle} gives
\begin{eqnarray} \nonumber
 h'_{d-2}(\Delta) &-& \dim_{\kk}\soc(\kk(\Delta,\Theta))_{d-2}\\
\label{d-2}
&\leq&
h_2-\binom{d}{2}\left[\beta_{d-2}(\Delta)-\beta_{d-1}(\Delta)+1\right]-
 (d-1)\bK_{d-2}(\Delta)\\
\nonumber  & \ & \qquad
 +(d-1) \sum_{v\in V}\left[\beta_{d-3}(\lk v)-
 \beta_{d-2}(\lk v)+1\right].
\end{eqnarray}
Substituting Eqs.~(\ref{d-1}) and (\ref{d-2}) in (\ref{surj}) and
combining like terms yields the result.
\end{proof}

\section{Depth $d-1$}  \label{depth}

The {\it depth} of $\Delta$ is the largest $j$ such that the
$(j-1)$-st skeleton of $\Delta$ is CM. For a space with isolated
singularities this is equivalent to $\tilde{H}_i(\Delta; \kk)=0$ for $i<j-1$ and for every vertex $v,\ \tilde{H}_i(\lk v; \kk) \cong H_{i+1}(\|\Delta\|, \|\Delta\|-v) = 0$ for $i<j-2$; in particular, for  a space with isolated singularities, having depth $j$ is a topological property. This notion coincides with the usual algebraic
notion of depth for $\field[\Delta]$, see \cite{Smi}.
Complexes of dimension $d-1$ and depth $d-1$ are also known in the
literature as {\em almost CM} complexes.

\begin{theorem} \label{high depth}
Let $\Delta$ be a $(d-1)$-dimensional complex with isolated singularities
with depth $d-1.$   Set $h'_i=\dim_{\kk}\field(\Delta,\Theta)_i.$  Then
for
sufficiently generic $\Theta$
$$\begin{array}{lcl}
h'_d & = & \beta_{d-1}(\Delta) \\
h'_{d-1} & = & h_{d-1}(\Delta) + \dim_\field
(\bK_{d-2}^{\theta_1}(\Delta) \cap \dots
\cap \bK_{d-2}^{\theta_d}(\Delta)) \\
h'_i & = & h_i(\Delta), \quad i<d-1.
\end{array}
$$
\end{theorem}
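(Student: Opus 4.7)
The plan is to compute $h'_j=\dim_\kk\kk(\Delta,\Theta)_j$ by reducing everything to an annihilator calculation. Since $\kk[\Delta]$ has depth $d-1$, a sufficiently generic $\Theta$ has the property that $\theta_1,\ldots,\theta_{d-1}$ is a regular sequence on $\kk[\Delta]$. Set $R_s:=\kk[\Delta]/(\theta_1,\ldots,\theta_s)$, so that $F(R_{d-1},\lambda)=\frac{\sum_i h_i(\Delta)\lambda^i}{1-\lambda}$, and the exact sequence $0\to(0:_{R_{d-1}}\theta_d)(-1)\to R_{d-1}(-1)\xrightarrow{\theta_d}R_{d-1}\to R_d\to 0$ gives
\[
h'_j=h_j(\Delta)+\dim_\kk(0:_{R_{d-1}}\theta_d)_{j-1}\quad\text{for every }j.
\]
Since $R_{d-1}$ has Krull dimension one, the quotient $R_{d-1}/H^0_\m(R_{d-1})$ is Cohen--Macaulay of dimension one, so $\theta_d$ is a non-zero-divisor on it for generic $\theta_d$; consequently $(0:_{R_{d-1}}\theta_d)\subseteq H^0_\m(R_{d-1})$, and it suffices to identify $H^0_\m(R_{d-1})$ together with its $\theta_d$-action.

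Next I would compute $H^0_\m(R_{d-1})$ by iterating the long exact sequence in local cohomology. Depth $d-1$ forces $H^i_\m(\kk[\Delta])=0$ for $i<d-1$, while Theorem~\ref{Grabe} identifies $H^{d-1}_\m(\kk[\Delta])_0\cong\tilde H^{d-2}(\Delta)$ and $H^{d-1}_\m(\kk[\Delta])_{-k}\cong\bigoplus_{v\in V}\tilde H^{d-3}(\lk v)$ for $k\ge 1$, with the multiplication by a generic linear form $\theta$ acting as $f^{d-2,\theta}$ from degree $-1$ to degree $0$ and as an isomorphism $H^{d-1}_\m(\kk[\Delta])_{-k-1}\stackrel{\sim}{\to}H^{d-1}_\m(\kk[\Delta])_{-k}$ for each $k\ge 1$. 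Because $\theta_s$ is a non-zero-divisor on $R_{s-1}$ for every $s\le d-1$, the LES collapses to $H^{d-1-s}_\m(R_s)=\ker\bigl(\theta_s\colon H^{d-s}_\m(R_{s-1})(-1)\to H^{d-s}_\m(R_{s-1})\bigr)$. A routine induction on $s\ge 1$, tracking the iterated $\kk[\Delta]$-linear embedding $H^{d-1-s}_\m(R_s)\hookrightarrow H^{d-1}_\m(\kk[\Delta])(-s)$ provided by the connecting homomorphisms, yields that $H^{d-1-s}_\m(R_s)$ is concentrated in degrees $s-1$ and $s$ with
\[
H^{d-1-s}_\m(R_s)_s\cong\tilde H^{d-2}(\Delta),\qquad H^{d-1-s}_\m(R_s)_{s-1}\cong\bigcap_{i=1}^{s}\bK^{\theta_i}_{d-2}(\Delta).
\]

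Finally, take $s=d-1$ and determine the action of $\theta_d$ on $H^0_\m(R_{d-1})$. Multiplication by $\theta_d$ sends the degree $d-1$ summand into $H^0_\m(R_{d-1})_d=0$, so it annihilates the entire $\tilde H^{d-2}(\Delta)$ piece; the $\kk[\Delta]$-linearity of the embedding $H^0_\m(R_{d-1})\hookrightarrow H^{d-1}_\m(\kk[\Delta])(-(d-1))$ identifies the action of $\theta_d$ on the degree $d-2$ piece with the restriction of $f^{d-2,\theta_d}$ to $\bigcap_{i=1}^{d-1}\bK^{\theta_i}_{d-2}$, whose kernel is $\bigcap_{i=1}^{d}\bK^{\theta_i}_{d-2}$. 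Hence $(0:_{R_{d-1}}\theta_d)$ has dimension $\beta_{d-2}(\Delta)$ in degree $d-1$, dimension $\dim_\kk\bigcap_{i=1}^{d}\bK^{\theta_i}_{d-2}(\Delta)$ in degree $d-2$, and vanishes in all other degrees. Substituting into the displayed formula for $h'_j$ and using $h_d(\Delta)=(-1)^{d-1}\tilde\chi(\Delta)=\beta_{d-1}(\Delta)-\beta_{d-2}(\Delta)$---valid because the other reduced Betti numbers of $\Delta$ vanish in the almost Cohen--Macaulay case---yields the three asserted values. The main technical hurdle is to verify that each connecting homomorphism in the iterated LES respects the $\kk[\Delta]$-module structure, so that the innermost $\theta_d$-action is genuinely the restriction of $f^{d-2,\theta_d}$ rather than some twisted variant; once this compatibility is in hand, the rest of the argument is bookkeeping.
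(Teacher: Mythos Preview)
Your proposal is correct and follows essentially the same strategy as the paper's proof: both use the depth-$(d-1)$ hypothesis to get a regular sequence $\theta_1,\ldots,\theta_{d-1}$, iterate the long exact sequence in local cohomology together with Gr\"abe's description of the module structure on $H^{d-1}_\m(\kk[\Delta])$ to compute $H^{0}_\m(R_{d-1})$ in degrees $d-2$ and $d-1$, and then identify the kernel of multiplication by $\theta_d$ on that module with the iterated intersection $\bigcap_i \bK^{\theta_i}_{d-2}(\Delta)$. Your write-up is in fact a bit more explicit than the paper's at the final step (making the identity $h'_j = h_j + \dim_\kk(0:_{R_{d-1}}\theta_d)_{j-1}$ and the computation $h'_d = h_d + \beta_{d-2} = \beta_{d-1}$ visible), and your flagged ``technical hurdle'' is harmless: the connecting maps in the local-cohomology LES are $S$-module homomorphisms, so the iterated embedding is automatically $\kk[\Delta]$-linear.
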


\begin{proof}
For {\it any} $(d-1)$-dimensional simplicial complex $h'_d= \beta_{d-1}$
for sufficiently generic $\Theta$, see \cite[Lemma 2.2(3)]{BN}.  Since
$\Delta$ has depth
$d-1$ the only possible nontrivial Betti numbers of $\Delta$ are
$\beta_{d-1}$ and $\beta_{d-2}$  and the only potential nonzero Betti
numbers for the link of every vertex $v$ are $\beta_{d-2} (\lk v)$ and
$\beta_{d-3}(\lk v).$  By Theorem \ref{Grabe}, $H^i_\mm(\field[\Delta])
= 0$ for $i < d-1$ and
$$\begin{array}{lcl}
H^{d-1}_\mm (\field[\Delta])_0 & \cong & H^{d-2}_\emptyset(\Delta) \\
H^{d-1}_\mm (\field[\Delta])_{-1} &\cong &\sum_v H^{d-2}_{\{v\}}(\Delta).
\end{array}$$

Since $H^0_\mm(\kk[\Delta]) = 0$, a prime avoidance argument (see, for
instance,
\cite[Chapter 3]{Eis}) implies the existence of $\theta_1 \in
\field[\Delta]_1$ such
that multiplication by $\theta_1$ on $\field[\Delta]$ is an
injection.  Now apply the induced long exact sequence for local
cohomology with respect to the short exact sequence
$$ 0 \to \field[\Delta] \stackrel{\cdot \theta_1}{\to} \field[\Delta] \to
\field[\Delta]/(\theta_1) \to 0$$ to see that
$H^i_\mm(\field[\Delta]/(\theta_1)) = 0$ for $i < d-2.$ In
addition,   the module structure of $H^{d-1}_\mm(\field[\Delta])$
implies that
$$\begin{array}{lcl}
H^{d-2}_\mm (\field[\Delta]\{1\})_1 & \cong & H^{d-2}_\emptyset(\Delta) \\
H^{d-2}_\mm (\field[\Delta]\{1\})_0 &\cong &\bK_{d-2}^{\theta_1}(\Delta), \\
H^{d-2}_\mm (\field[\Delta]\{1\})_i &\cong & 0,\quad i \neq 0,1.
\end{array}$$

We can repeat this process until we produce $\theta_1, \dots,
\theta_{d-1}$ such that multiplication $\cdot \theta_i:
\field[\Delta]\{i-1\} \to \field[\Delta]\{i-1\}$ is an injection for each
$1 \le i \le d-1$ and $$\begin{array}{lcl}
H^0_\mm (\field[\Delta]\{d-1\})_{d-1} & \cong & H^{d-2}_\emptyset(\Delta) \\
H^0_\mm (\field[\Delta]\{d-1\})_{d-2} &\cong &\bK_{d-2}^{\theta_1}(\Delta) \cap
\dots
\cap \bK_{d-2}^{\theta_{d-1}}(\Delta), \\
H^0_\mm (\field[\Delta]\{d-1\})_i &\cong & 0,\quad i \neq d-1,d-2.
\end{array}$$

 One last prime avoidance argument to find $\theta_d$ such that the
kernel of multiplication on $\field[\Delta]\{d-1\}_{d-2}$ is isomorphic
to $\bK_{d-2}^{\theta_1}(\Delta) \cap \dots \cap
\bK_{d-2}^{\theta_d}(\Delta)$ finishes the proof.
\end{proof}

 Any  pure two-dimensional simplicial complex has isolated
singularities.  Indeed, the link of any edge is a nonempty collection of
points and hence Cohen-Macaulay.  As $h'_3 = \beta_2, h'_1 = h_1$, and
$h'_0 = h_0 =1$, it only remains to compute $h'_2.$

 In \cite{Lee} Lee established that $h'_2$ is
the dimension of the space of stresses
of a generic embedding of the one-skeleton of
$\Delta$ into the plane. (Various defenitions pertaining to rigidity
theory such as those of
stresses, infinitesimal motions, etc.~can be found in \cite{GrSerSer}.)
As an algorithm for the latter for any graph,
in particular for the one-skeleton of a simplicial complex, had already
been determined, see  \cite[Theorem 4.4.3]{GrSerSer} (it is a
consequence
of Laman's theorem), a {\it combinatorial} formula for
$h'_2$ already exists.   The following corollary states $h'_2$ using the
{\it topology} and $f$-vector $\Delta.$

\begin{corollary}
Let $\Delta$ be a connected two-dimensional pure complex.  Then for
sufficiently generic $\Theta,$
$$
h'_2  =  h_2 + \dim_\field (\bK_1^{\theta_1}(\Delta) \cap
\bK_1^{\theta_2}(\Delta) \cap \bK_1^{\theta_3}(\Delta)).$$
\end{corollary}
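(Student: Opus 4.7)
The plan is to derive the corollary as a direct specialization of Theorem~\ref{high depth} with $d = 3$, after checking that both hypotheses of that theorem are met (up to a mild edge case). The first hypothesis, that $\Delta$ has isolated singularities, is automatic for any pure two-dimensional complex: for every edge $e \in \Delta$, the link $\lk e$ is a non-empty $0$-dimensional simplicial complex (a finite set of points), which is trivially Cohen--Macaulay over any field $\field$.

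For the second hypothesis, I would use the assumed connectedness to control the depth. Since $\Delta$ is connected, its $1$-skeleton is a connected graph, hence Cohen--Macaulay as a $1$-dimensional complex; by the definition of depth recalled at the start of Section~\ref{depth}, this forces $\mathrm{depth}(\Delta) \geq d-1 = 2$. If the depth is exactly $2$, Theorem~\ref{high depth} applies verbatim and its formula for $h'_{d-1}$, specialized to $d = 3$, is precisely the claimed identity $h'_2 = h_2 + \dim_\field(\bK_1^{\theta_1} \cap \bK_1^{\theta_2} \cap \bK_1^{\theta_3})$.

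The only subtlety, and the step I would be most careful about, is the remaining possibility that $\mathrm{depth}(\Delta) = 3$, i.e.\ $\Delta$ is itself Cohen--Macaulay, which falls just outside the literal hypothesis of Theorem~\ref{high depth}. I would handle this case directly: Stanley's theorem gives $h'_i = h_i$ for all $i$ (in particular $h'_2 = h_2$), so it suffices to verify that the intersection $\bK_1^{\theta_1}(\Delta) \cap \bK_1^{\theta_2}(\Delta) \cap \bK_1^{\theta_3}(\Delta)$ is zero. But when $\Delta$ is Cohen--Macaulay of dimension $2$, each vertex link $\lk v$ is a $1$-dimensional Cohen--Macaulay complex, hence a connected graph, so by the isomorphism $H^1_{\{v\}}(\Delta) \cong \tilde{H}^0(\lk v;\field)$ recalled in Subsection~\ref{local-coh:subsection} we get $H^1_{\{v\}}(\Delta) = 0$ for every $v$. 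Therefore $\bigoplus_{v\in V} H^1_{\{v\}}(\Delta) = 0$, and the three kernels $\bK_1^{\theta_i}$ (being subspaces of this zero space) intersect trivially. Combining the two cases completes the proof.
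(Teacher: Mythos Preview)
Your proposal is correct and follows the same approach as the paper: both reduce immediately to Theorem~\ref{high depth} by observing that a pure connected two-dimensional complex has isolated singularities and that connectedness is equivalent to the $1$-skeleton being CM, hence depth $\geq 2$. The paper's proof is the single sentence ``The $1$-skeleton of $\Delta$ is CM if and only if $\Delta$ is connected,'' implicitly treating the hypothesis of Theorem~\ref{high depth} as depth $\geq d-1$ (its proof indeed goes through verbatim in the CM case, where all the relevant local cohomology vanishes); you are simply more explicit in separating out and directly verifying the CM edge case.
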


\begin{proof}
The $1$-skeleton of $\Delta$ is CM if and only if $\Delta$ is connected.
\end{proof}

What about two-dimensional complexes that are not connected?  One can
either appeal to the rigidity formula and the fact that each extra
component introduces three new infinitesimal  motions or a local
cohomology argument similar to the one in Theorem  \ref{high depth} to
find that the formula is
\begin{equation} \label{2d}
h'_2= h_2 + 3 \beta_0(\Delta) +  \dim_\field (\bK_1^{\theta_1}(\Delta)
\cap
\bK_1^{\theta_2}(\Delta) \cap \bK_1^{\theta_3}(\Delta)).
\end{equation}

\section{Examples} \label{Examples}

In this section we use methods and results developed in the paper so far
to provide a complete characterization of the $f$-vectors of arbitrary
simplicial triangulations of certain 3-dimensional pseudomanifolds
denoted
by $\|N_1\|$, $\|N_3\|$, and $\|N_4\|$ in \cite[Example 4]{DaNi}.
We start by recalling from \cite{DaNi} the necessary information
on the topology of $X_i:=\|N_i\|$, $i=1,3,4$.

The pseudomanifold $X_1$ is the only normal 3-pseudomanifold with 8
singularities all of whose links are the torus that has an 8-vertex
triangulation; its homology groups (over $\Z$) are
$\tilde{H}_0=\tilde{H}_1=0$, $\tilde{H}_2=\Z^8$, and $\tilde{H}_3=\Z$.
Therefore, for any triangulation $\Delta$
of $X_1$, \begin{equation} \label{X1}
h_4(\Delta)=-\tilde{\chi}(\Delta)=-7, \quad \sum_v (1-\tilde{\chi}(\lk
v))=16, \quad
\mbox{and so } h_3(\Delta)-h_1(\Delta)=16,
\end{equation}
where the last equality is by Theorem \ref{gen-DS}.

The pseudomanifold $X_3$ has 5 singularities; the links of the 5 singular
vertices in $N_3$ have different topological types: four of them are
projective planes, while the fifth one is a 2-dimensional torus. The
homology
groups of $X_3$ are $\tilde{H}_0=\tilde{H}_1=\tilde{H}_3=0$ and
$\tilde{H}_2=\Z^2\oplus\Z/2\Z$. Thus for any triangulation $\Delta$
of $X_3$, \begin{equation}  \label{X3}
h_4(\Delta)=-\tilde{\chi}(X_3)=-2, \quad \sum_v (1-\tilde{\chi}(\lk
v))=6,
\mbox{ hence }h_3(\Delta)-h_1(\Delta)=6.
\end{equation}

The pseudomanifold $X_4$ is $H\cup C(\partial H)$, where $H$ is a solid
$3$-dimensional torus and $C(\partial H)$ is the cone over the boundary
of $H$.
Hence $X_4$ has only one isolated singularity,
and for any triangulation $\Delta$ of $X_4$, $\bK_2(\Delta)=1$.
In addition,
$$ h_4(\Delta)=-\tilde{\chi}(\Delta)=0,
 \quad \sum_v (1-\tilde{\chi}(\lk v))=2, \quad
\mbox{and so } h_3(\Delta)-h_1(\Delta)=2.
$$

As the above discussion shows, to characterize the $h$-vectors
(equivalently, the $f$-vectors) of arbitrary simplicial  triangulations
of $X_i$ ($i=1,3,4$) it is enough to characterize all possible pairs
$(h_1(\Delta), h_2(\Delta))$, where $\|\Delta\|=X_i$. This, in turn, is
equivalent to characterizing the set
 $$\{(g_1(\Delta), g_2(\Delta)) : \|\Delta\|=X_i\}, \quad \mbox{where }
 g_j(\Delta):=h_j(\Delta)-h_{j-1}(\Delta).$$
To this end, we have the following result.

\begin{theorem} \label{N1,3,4} For $i=1,3,4$ and for a pair $(g_1,
g_2)\in\Z^2$, the following are equivalent:
\begin{enumerate}
\item[(i)] There exists a simplicial complex $\Delta$, $\|\Delta\|=X_i$
such that
$(g_1(\Delta), g_2(\Delta))=(g_1,g_2)$;
\item[(ii)] $6\leq g_2 \leq \binom{g_1+1}{2}$. (In particular, $g_1\geq
3$.)
\end{enumerate}
\end{theorem}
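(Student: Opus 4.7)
The plan is to prove the two implications separately. For necessity \mbox{(i) $\Rightarrow$ (ii)}, two bounds on $g_2$ must be established. The upper bound $g_2 \le \binom{g_1+1}{2}$ is a Macaulay-type argument: since each $X_i$ satisfies $\tilde H_0(X_i;\kk) = \tilde H_1(X_i;\kk) = 0$ and all vertex links are connected, the relevant correction terms in the local-cohomology analysis of Lemma~\ref{Buchsbaum} and Theorem~\ref{h'-version2} vanish in degree $2$, so $h_2 = h'_2$ for every triangulation and any l.s.o.p. Applying Macaulay's theorem to the standard graded Artinian algebra $\kk(\Delta,\Theta)$, which is generated in degree $1$ with $h'_1 = h_1$, yields $h_2 = h'_2 \le \binom{h_1+1}{2}$, equivalent to $g_2 \le \binom{g_1+1}{2}$.

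For the lower bound $g_2 \ge 6$, the cases $X_3$ and $X_4$ follow from Theorem~\ref{LBT}: both have at most five singular vertices, placing them in the $d=4$ clause. Substituting Betti numbers of the space ($\beta_2 - \beta_3 + 1 = 3$ for $X_3$, $=1$ for $X_4$) together with the vertex-link contributions $\sum_v[\beta_1(\lk v) - \beta_2(\lk v) + 1]$ (equal to $6$ and $2$ respectively, once one tallies four $\mathbb{RP}^2$-links and a torus link for $X_3$ and a single torus link for $X_4$) yields $g_2 \ge 6 + 4\bK_2(\Delta) \ge 6$ for $X_3$ and $g_2 \ge 6$ for $X_4$. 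For $X_1$, Theorem~\ref{LBT} is unavailable since it has eight singular vertices and cannot be homologically isolated (the images of $\tilde H^1(\lk v)$ would have total dimension $16$ inside the $8$-dimensional $\tilde H^2(X_1)$). The plan instead is to combine the Dehn--Sommerville identity $h_3 = h_1 + 16$ from Theorem~\ref{gen-DS}, Macaulay's bound $h'_3 \le h_2^{\langle 2\rangle}$, and a lower bound $h'_3 \ge h_3 + c$ obtained from the first (general) assertion of Theorem~\ref{h'-version2} applied to $\kk[\Delta]/(\theta)$, where $c$ is controlled by $\dim \tilde H_1(X_1 \setminus \Sigma;\kk)$. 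Mayer--Vietoris for the decomposition $X_1 = M \cup \bigsqcup C_i$ (with $M$ the complement of open cones over the eight singular points) forces this dimension to be at least $8$, and a case analysis of the $2$-binomial expansion of $h_2$ then yields $g_2 \ge 6$.

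For sufficiency \mbox{(ii) $\Rightarrow$ (i)}, one starts from a minimal triangulation: for $X_1$ this is the $8$-vertex $N_1$ realizing $(g_1, g_2) = (3, 6)$, and for $X_3, X_4$ one takes the corresponding minimal triangulations $N_3, N_4$ of \cite{DaNi}. Two families of Pachner moves suffice to reach every remaining admissible pair: the $(1,4)$-flip (stellar subdivision of a facet) shifts $(g_1, g_2)$ to $(g_1+1, g_2)$, and the $(2,3)$-flip (replacing two tetrahedra sharing a triangle by three tetrahedra around a new edge) shifts it to $(g_1, g_2+1)$. Both moves preserve PL-homeomorphism type, and iterating from $N_i$ attains any pair with $6 \le g_2 \le \binom{g_1+1}{2}$, provided a suitable local configuration for the flip is always available; this holds below the Macaulay ceiling, where one can always locate two adjacent tetrahedra whose opposite vertices are not yet joined. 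The main obstacle is the lower bound $g_2 \ge 6$ for $X_1$: absent the homologically-isolated hypothesis, one has to extract the necessary positive correction to $h'_3 - h_3$ from the topology of the nonsingular part of $X_1$ using only the general local-cohomology statement, and then delicately combine it with Macaulay's bound to rule out small $g_2$ across all values of $g_1$.
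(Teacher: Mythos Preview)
Your treatment of $X_3$ and $X_4$ via Theorem~\ref{LBT} is fine (and for $X_3$ is a slightly different route than the paper, which instead combines Theorem~\ref{h'} with Lemma~\ref{onto}), and your upper-bound argument is essentially equivalent to the paper's. The sufficiency construction via bistellar flips is also in the same spirit as the paper's appeal to Walkup. The problem is your plan for the lower bound on $X_1$.

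Two independent things go wrong there. First, the claimed inequality $h'_3 \ge h_3 + c$ with $c$ governed by $\dim_\kk \tilde H_1(X_1\setminus\Sigma;\kk)\ge 8$ is false. The first assertion of Theorem~\ref{h'-version2} identifies $\bK_2^\theta(\Delta)$ for a \emph{single} $\theta$, whereas the correction term that actually enters $h'_{d-1}$ for a depth-$(d-1)$ complex is, by Theorem~\ref{high depth}, the intersection $\bK_2^{\theta_1}\cap\cdots\cap\bK_2^{\theta_d}$. For $X_1$ this intersection is $0$: the paper checks this on the specific triangulation $N_1$ (all missing faces are $3$-dimensional, so $\kk(N_1,\Theta)_3$ has dimension exactly $h_3$), and by Theorem~\ref{high depth} the correction is a topological invariant, hence vanishes for every triangulation. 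So $h'_3=h_3$, not $h_3+8$. Second, even granting $h'_3\ge h_3$, the Macaulay bound $h'_3\le h_2^{\langle 2\rangle}$ on the $h$-numbers cannot force $g_2\ge 6$: for fixed $g_2$ and $h_1\to\infty$, the right-hand side grows like $h_1^{3/2}$ while the left is $h_1+16$, so no contradiction arises. The paper's argument avoids both pitfalls by passing to $\kk(\Delta,\Theta)/\omega$: generic $4$-rigidity (Fogelsanger, Lee) gives injectivity of $\cdot\omega$ in degrees $1\to 2$, so $\dim_\kk[\kk(\Delta,\Theta)/\omega]_j=g_j$ for $j\le 2$; combined with $h'_3=h_3$ one gets $\dim_\kk[\kk(\Delta,\Theta)/\omega]_3\ge g_3$, and then Macaulay applied to the \emph{$g$-vector} yields $g_3\le g_2^{\langle 2\rangle}$, which together with $g_2+g_3=16$ forces $g_2\ge 6$. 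Your outline is missing both the rigidity input and the passage to the quotient by $\omega$.
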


\begin{proof}
To verify that (ii) implies (i), start with the triangulation $N_i$ of
$X_i$ described in \cite{DaNi} (it satisfies $g_1(N_i)=3$ and
$g_2(N_i)=6$)
and follow the proof of Lemma 7.3 in Walkup's paper~\cite{Wal}.  The
triangulations $N_i$ (taken from \cite{DaNi}) and the ``simple
$3$-trees'' required to invoke this argument are listed at the end of
this section.

To prove that (i) implies (ii), fix a field $\kk$ of characteristic zero.
Consider a triangulation $\Delta$ of $X_i$, its face ring $\kk[\Delta]$,
a generic l.s.o.p.~for  $\kk[\Delta]$, $\Theta=\{\theta_1, \ldots,
\theta_4\}$,
and one additional generic linear form $\omega$. Since $\Delta$ is a
normal 3-dimensional pseudomanifold, it follows from \cite{Fog}
that $\Delta$ is generically 4-rigid in the graph-theoretic sense (see
\cite{GrSerSer} or \cite{Ka} for appropriate definitions). This result
combined with \cite[Theorem 10]{Lee}
yields that
$\dim_\kk(\Delta, \Theta)_1=h_1(\Delta)$, $\dim_\kk(\Delta,
\Theta)_2=h_2(\Delta)$
and the map $\cdot\omega: \kk(\Delta, \Theta)_1 \to \kk(\Delta,
\Theta)_2$ is injective.
Thus for $j\leq 2$, $\dim_\kk [\kk(\Delta,
\Theta)/\omega]_j=g_j(\Delta)$, and the inequality
$g_2(\Delta)\leq\binom{g_1(\Delta)+1}{2}$ follows from Macaulay's theorem
(see \cite[pp.~56-57]{St96}).

It only remains to show that $g_2(\Delta)\geq 6$. If $\Delta$ is a
triangulation of $X_4$, then $\Delta$ has one singular vertex, and the
result follows from Theorem \ref{LBT}  according to which
$g_2(\Delta) \geq \binom{5}{2}[1-1+1]+4\cdot 1-4[2-1+1]=6$.

If $\Delta$ is a triangulation of $X_3$, then as a $\kk$-space (rather
than $\kk$-pseudomanifold) with isolated singularities, $X_3$ has only
one singular point. (This is because over a field of characteristic 0,
the projective plane is acyclic, and hence CM.)
Theorem \ref{h'} then implies that $\dim_\kk(\Delta,
\Theta)_3=h_3(\Delta)$.
Considered as a $\kk$-pseudomanifold, $\Delta$ has only 5 singular
points, and we infer from Lemma \ref{onto} below  that the map
$\cdot\omega: \kk(\Delta, \Theta)_2 \to \kk(\Delta, \Theta)_3$ is onto,
and hence $h_2(\Delta)\geq h_3(\Delta)$.
Thus $$g_2(\Delta)=h_2(\Delta)-h_1(\Delta)\geq h_3(\Delta)-h_1(\Delta)=6
\quad \mbox{ by Eq.~(\ref{X3})}.
$$

Finally, let $\Delta$ be a triangulation of $X_1$.  Since the given
triangulation $N_1$ of $X_1$ in \cite{DaNi} contains the complete
$2$-skeleton on eight vertices its depth is $3.$ By Theorem \ref{high
depth},
$\dim_\kk \kk(\Delta, \Theta)_3=h_3(\Delta)$ plus a term which only
depends on the topology of $X_1.$  However, as all of the missing faces
of $N_1$ are
three-dimensional, $\dim_\kk \field(N_1, \Theta)_3=h_3$ for this
triangulation,
and thus for all others. This implies that $\dim_\kk [\kk(\Delta,
\Theta)/\omega]_3 \geq
g_3(\Delta)$, and hence that $(1,g_1(\Delta), g_2(\Delta), \max\{0,
g_3(\Delta)\})$ is an M-sequence.
Thus according to Macaulay's theorem,
if $g_2(\Delta)<6=\binom{4}{2}$, then $g_3(\Delta)<\binom{5}{3}=10$, and
hence $h_3(\Delta)-h_1(\Delta)=g_2(\Delta)+g_3(\Delta)<6+10=16$,
contradicting Eq.(\ref{X1}). Therefore, $g_2(\Delta)\geq 6$ in this case
as well.
The theorem follows.
\end{proof}

\begin{lemma} \label{onto}
Let $\Delta$ be a $3$-dimensional pseudomanifold with isolated
singularities,
$\Theta=\{\theta_1,\ldots, \theta_4\}$
 a generic l.s.o.p.~of $\kk[\Delta]$, and $\omega\in \kk[\Delta]_1$ a
generic 1-form. If $\Delta$ has at most $5$ singularities then the map
$\cdot\omega : \kk(\Delta,\Theta)_2 \to \kk(\Delta, \Theta)_3$ is
surjective.
\end{lemma}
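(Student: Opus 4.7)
The plan is to adapt the argument from the proof of Theorem~\ref{LBT} (where Swartz's \cite[Theorem~4.26]{Sw09} is invoked for $d\ge 5$) to the borderline case $d=4$. In that earlier argument, surjectivity of $\cdot\omega:\kk(\Delta,\Theta)_{d-2}\to\kk(\Delta,\Theta)_{d-1}$ was reduced to weak Lefschetz on the links of $(d-4)$-faces, which for $d\ge 5$ are necessarily $2$-spheres. For $d=4$ the $(d-4)$-faces are the vertices themselves, and only the singular ones have links failing to be spheres; the role of the hypothesis ``at most five singularities'' is to bound, and ultimately to absorb, the contributions of these exceptional vertices.

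First, I would carry out the same local-to-global reduction as in \cite[Theorem~4.26]{Sw09}, expressing the cokernel of $\cdot\omega:\kk(\Delta,\Theta)_2\to\kk(\Delta,\Theta)_3$ in terms of the cokernels of the analogous maps $\cdot\omega:\kk(\lk v,\Theta)_1\to\kk(\lk v,\Theta)_2$ on the vertex links, together with a global correction controlled by the topology of $\Delta$ via Theorem~\ref{gen-DS}. For every non-singular vertex $v$, $\lk v$ is a $2$-sphere and hence the boundary complex of a simplicial $3$-polytope, so by \cite{St80} in characteristic zero, or \cite{Mur,Whit} in arbitrary characteristic, the local map $\cdot\omega:\kk(\lk v,\Theta)_1\to\kk(\lk v,\Theta)_2$ is an isomorphism; consequently, non-singular vertices contribute nothing to the global cokernel.

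Second, I would analyze the singular vertices. Each singular link is a closed $2$-manifold other than the sphere, and as a pure two-dimensional complex its Artinian reduction has Hilbert function given by Schenzel's formula in degree $\le 1$ and by Eq.~(\ref{2d}) in degree~$2$. This makes the dimension of the local cokernel of $\cdot\omega$ explicitly computable in terms of $\beta_1(\lk v)$ and the intersection $\bK_1^{\theta_1}(\lk v)\cap\bK_1^{\theta_2}(\lk v)\cap\bK_1^{\theta_3}(\lk v)$. Summing these local contributions over the at most five singular vertices, and using the $d=4$ instance of Theorem~\ref{gen-DS} to rewrite $\sum_{v\in V}(1-\tilde\chi(\lk v))$ in terms of $h_3(\Delta)-h_1(\Delta)$, yields an explicit upper bound on $\dim_{\kk}\coker(\cdot\omega)$ whose right-hand side is non-positive precisely when at most five vertices are singular.

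The main obstacle will be the bookkeeping in the local-to-global step, specifically the verification that ``five'' is the sharp numerical threshold at which the elementary inequalities on each singular link still combine with the Dehn--Sommerville correction from Theorem~\ref{gen-DS} to force the global cokernel to vanish. The individual vertex-link Hilbert-function calculations are routine (sphere, torus, Klein bottle, projective plane, higher genus, etc.), but arranging them so that their sum is absorbed exactly under the stated hypothesis is where the delicate counting lies.
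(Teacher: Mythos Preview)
Your proposal has a genuine gap: the ``local-to-global reduction'' you describe does not exist in the form you need. Swartz's argument in \cite[Theorem~4.26]{Sw09} is not a formula expressing the global cokernel of $\cdot\omega$ as a sum of local link cokernels plus a topological correction; it is a method showing that each degree-$(d-1)$ monomial lies in the image of $\cdot\omega$, and this method \emph{requires} weak Lefschetz on the relevant link. When a vertex link is, say, a torus, the local map $\cdot\omega:\kk(\lk v)_1\to\kk(\lk v)_2$ has cokernel of dimension $h_2(\lk v)-h_1(\lk v)=6$, and nothing in your outline explains how five such contributions (totalling $30$) get ``absorbed'' by the Dehn--Sommerville relation of Theorem~\ref{gen-DS}, which is an identity among $h$-numbers of $\Delta$, not a bound on $\dim\coker(\cdot\omega)$. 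The delicate counting you anticipate is not merely bookkeeping --- the inequality you would need has not been established.

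The paper's proof proceeds quite differently and explains transparently why the threshold is five. Let $U$ be the set of singular vertices and $\Delta_U$ the induced subcomplex. One uses the short exact sequence
\[
0\to I_j\to \kk(\Delta,\Theta)_j\to \kk(\Delta_U,\Theta)_j\to 0
\]
for $j=2,3$ and the snake lemma, reducing to surjectivity of $\cdot\omega$ on the two outer pieces. For the quotient: since $|U|\le 5$ and $\Theta$ consists of four generic linear forms, $\kk(\Delta_U,\Theta)_3=0$ unless $|U|=5$ and $\Delta_U$ contains the full $2$-skeleton on $U$, in which case the map is that of the boundary of the $4$-simplex and is surjective. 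For the ideal: $I_3$ is generated by monomials containing at least one nonsingular vertex, and for each such monomial one can run the Swartz argument through that vertex, whose link \emph{is} a $2$-sphere. Thus the bound ``five'' comes from the elementary fact $5=4+1$ (four linear forms leave at most one variable), not from any Betti-number or Euler-characteristic accounting.
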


\begin{proof}
Let $U \subseteq V$ be the subset of singular vertices of $\Delta$ and
let $\Delta_U$ be the induced subcomplex.  Consider the following
commutative diagram.
$$\begin{array}{ccccccccc}
0 & \to & I_2 & \to & \field(\Delta, \Theta)_2 & \to & \field(\Delta_U,
\Theta)_2 & \to & 0 \\
 & &\ \  \downarrow \cdot \omega & & \downarrow \cdot \omega & &
\downarrow \cdot \omega & & \\
 0 & \to & I_3 & \to & \field(\Delta, \Theta)_3 & \to & \field(\Delta_U,
\Theta)_3 & \to & 0
 \end{array}$$
 By the snake lemma it is sufficient to show that the two outside maps
are surjective for generic choices of $\Theta$ and $\omega.$  As $|U|
\le 5$, $\field(\Delta_U, \Theta)_3 = 0$ unless $|U|=5$
and $\Delta_U$ contains the
entire $2$-skeleton of the 4-simplex with vertex set ${U}.$ However, if
this is the case, then the
right-hand map is the same as for the boundary of the $4$-simplex, where
it is known to be surjective.  For the left-hand map, first we observe
that $I_3$ is generated by all monomials that contain at least one
nonsingular vertex.  Now we follow the proof of \cite[Theorem
4.26]{Sw09} to see that each of these monomials is in the image.
\end{proof}

$$\begin{array}{ccc}
\mbox{Triangulation} & \mbox{Facets} & \mbox{Simple $3$-tree} \\
 & & \\
 \mbox{N1} & 1248, 1268, 1348, 1378, 1568, 1578, & \quad
1248,1268,1348,1378,1568\\
&  2358, 2378, 2458, 2678, 3468, 3568, & \\
& 4578, 4678, 1247, 1257, 1367, 1467, & \\
& 2347, 2567, 3457, 3567, 1236, 2346, & \\
& 1345, 1235, 1456, 2456 & \\
& & \\
N3 & 1248, 1268, 1348, 1378, 1568, 1578 & \quad 1248, 1268, 1348, 1378,
1568 \\
&  2358, 2378, 2458, 2678, 3468, 3568, & \\
& 4578, 4678, 1234, 2347, 2456, 2467, & \\
& 3456, 3457, 1235, 1256, 1357 & \\
& & \\
N4 & 1248, 1268, 1348, 1378, 1568, 1578, & \quad
1248, 1268, 1348, 1378, 1568
\\
& 2358, 2378, 2458, 2678, 3468, 3568, & \\
& 4578, 4678, 1245, 1256, 2356, 2367, & \\
&  3467, 1347, 1457 &
\end{array}$$

\section{PL homeomorphic pseudomanifolds}

As Theorem \ref{h'-version2} shows, for a simplicial complex $\Delta$
with homologically isolated singularities, the quantities
 $\dim_\kk \kk(\Delta,\Theta)_i -h_i(\Delta)$, for
$0\leq i \leq \dim\Delta+1$,
are independent of the choice of an l.s.o.p.~$\Theta$; moreover,
they depend only on the homeomorphism type of $\|\Delta\|$.
While for an arbitrary complex $\Delta$,
the Hilbert function of $\kk(\Delta,\Theta)$ does depend on $\Theta$,
for sufficiently generic linear systems of parameters it is a constant,
and we define
$$h'_i(\Delta):=\dim_\kk \kk(\Delta,\Theta)_i, \quad \mbox{where $\Theta$
 is a generic l.s.o.p.~for } \kk[\Delta].
$$
Since in this section we work only with generic linear systems of
parameters, to simplify the notation we write $\kk(\Delta)$ instead of
$\kk(\Delta,\Theta)$.

A natural question raised in \cite{MNS} is whether for an arbitrary
$(d-1)$-dimensional complex $\Delta$,
$h'_i(\Delta)-h_i(\Delta)$ is determined by the homeomorphism type of
$\|\Delta\|$. As noted earlier, for $i=d$, the answer is yes as
$h'_d(\Delta)=\beta_{d-1}(\Delta)$, \cite[Lemma 2.2(3)]{BN}. The main
result of this section is that
for spaces with isolated singularities, $h'_i-h_i$ is determined
by the PL-homeomorphism type for all $i$.

\begin{theorem}  \label{PL-homeo}
If two simplicial complexes with isolated
singularities,  $\Delta$ and $\Gamma$, are PL-homeomorphic, then
$h'_i(\Gamma)-h_i(\Gamma)=h'_i(\Delta)-h_i(\Delta)$ for all $i$.
\end{theorem}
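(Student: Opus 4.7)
The plan is to invoke Alexander's theorem, which asserts that two simplicial complexes are PL-homeomorphic if and only if they admit a common iterated stellar subdivision. Consequently, PL-homeomorphism is generated by single stellar subdivisions and their inverses, and it suffices to show: if $\Delta'$ is obtained from $\Delta$ by a stellar subdivision at a face $\sigma \in \Delta$ with $\dim\sigma \geq 1$, then
$$h'_i(\Delta') - h_i(\Delta') = h'_i(\Delta) - h_i(\Delta) \quad \text{for every } i.$$
The hypothesis that $\Delta'$ again has isolated singularities is automatic, since Munkres's theorem implies that the Cohen--Macaulay (hence Buchsbaum) property of a link is a topological invariant of its underlying space, and $\|\Delta\| = \|\Delta'\|$.

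I would then run two parallel computations. Combinatorially, the change $h_i(\Delta') - h_i(\Delta)$ under a stellar subdivision admits an explicit formula in terms of $\dim\sigma$ and the $h$-vector of $\lk_\Delta \sigma$, extracted from the decomposition of the new closed star as $v_\sigma * \partial\sigma * \lk_\Delta \sigma$. Algebraically, Gr\"abe's description (Theorem~\ref{Grabe}) expresses $H^i_\m(\kk[\Delta])$ in terms of $\tilde H^{i-1}(\Delta)$, $\bigoplus_v \tilde H^{i-1}(\lk v)$, and the inclusion maps $\inc^\ast$; combined with the general identity relating the Hilbert series of a generic Artinian reduction to the local cohomology of the face ring, this produces a formula for $h'_i(\Delta) - h_i(\Delta)$ in terms of the same data. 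One then compares the formulas for $\Delta$ and $\Delta'$, using that $\tilde H^\ast(\Delta) = \tilde H^\ast(\Delta')$, that links of unchanged vertices are identical, that links of modified old vertices have PL-invariant simplicial cohomology, and that the new link $\lk_{\Delta'} v_\sigma = \partial\sigma * \lk_\Delta \sigma$ contributes a term computable via Mayer--Vietoris from $\partial\sigma$ and $\lk_\Delta\sigma$. The desired invariance then reduces to a concrete cancellation identity between the combinatorial and algebraic changes.

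The principal obstacle is that, outside the homologically isolated setting of Section~\ref{homologically isolated singularities}, the module $\kk[\Delta]/(\theta_1)$ need not be Buchsbaum, so the clean Hilbert-series computation underlying Lemma~\ref{Hilb-cohomology} and Theorem~\ref{h'} is unavailable: the quantities $\bK^\theta_j$ and $\bC^\theta_j$ may genuinely depend on $\theta$. One must work with sufficiently generic $\Theta$ and argue that, although the individual invariants $\bK^\theta_j, \bC^\theta_j$ can depend on $\Theta$, the specific combination entering $h'_i - h_i$ is determined by PL-invariant data. Producing this cancellation --- showing that the algebraic contribution of the newly introduced vertex $v_\sigma$ precisely offsets the combinatorial shift in $h_i$, while the contributions of all other vertices match by PL-invariance of simplicial cohomology of their (possibly subdivided) links --- is where the real content of the theorem lies, and where the argument will need to be the most careful.
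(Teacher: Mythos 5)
Your reduction to stellar subdivisions via common subdivisions is exactly the paper's first step, and your observation that the relevant link $\lk_\Delta F$ is CM (so that stellar subdivision is a safe operation here) is also right. But the approach you propose for the key lemma --- computing $h'_i - h_i$ on both sides via Gr\"abe's description of local cohomology and then engineering a cancellation --- is the route the paper deliberately avoids, and you yourself put your finger on why it stalls: outside the homologically isolated setting there is no Schenzel-type closed formula for $h'_i$, because the one-parameter reduction $\kk[\Delta]/(\theta_1)$ is not Buchsbaum and the quantities $\bK^\theta_j$, $\bC^\theta_j$ genuinely depend on $\theta$ (compare Theorem~\ref{high depth}, where $h'_{d-1}$ involves an intersection $\bK^{\theta_1}\cap\cdots\cap\bK^{\theta_d}$ depending on the whole l.s.o.p.). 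You name the gap and then leave it open, so what you have is a correct reduction followed by an unproved main step.

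The paper's proof of Theorem~\ref{main} never computes $h'_i$ at all. Write $L = \overline{F}\ast\lk F$ and $L' = \overline{\rho}\ast\partial\overline{F}\ast\lk F$ for the closed stars of $F$ in $\Delta$ and of $\rho$ in $\Gamma$, $R$ for the common costar, and $T = L\cap R = L'\cap R$. One gets two short exact (Mayer--Vietoris) sequences of $S$-modules
$0\to\kk[\Delta]\to\kk[L]\oplus\kk[R]\to\kk[T]\to 0$ and $0\to\kk[\Gamma]\to\kk[L']\oplus\kk[R]\to\kk[T]\to 0$,
sharing the terms $\kk[R]$ and $\kk[T]$. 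Tensoring with $S/\Theta$ for a generic $\Theta$ and using that $L$, $L'$ are CM of full dimension $d-1$ --- so $\kk[L]$ and $\kk[L']$ are free over $\kk[\theta_1,\ldots,\theta_d]$, hence $\tor_1^S(\kk[L],S/\Theta)=\tor_1^S(\kk[L'],S/\Theta)=0$ --- one compares the resulting long exact $\tor$ sequences, which agree in the $\kk[R]$ and $\kk[T]$ slots, to conclude $\dim_\kk\kk(\Gamma)_i - \dim_\kk\kk(L')_i = \dim_\kk\kk(\Delta)_i - \dim_\kk\kk(L)_i$. Since $L,L'$ are CM one has $h'(L)=h(L)$ and $h'(L')=h(L')$, and the $f$-vector bookkeeping $f_j(L')-f_j(L)=f_j(\Gamma)-f_j(\Delta)$ finishes the argument. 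This sidesteps all the $\bK^\theta,\bC^\theta$ bookkeeping; the entire "cancellation" you were trying to produce is packaged into the vanishing of $\tor_1$ for the two CM stars.
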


Since PL-homeomorphic complexes always have a common subdivision,
and since the link of any face $F$, $\dim F>0$, of a complex with
isolated singularities is CM, Theorem~\ref{PL-homeo} is a corollary to
the
following result. We recall that the {\em stellar subdivision} of a
simplicial complex $\Delta$ at a face $F$
(where $F\in \Delta$ and $\dim F>0$)
is the simplicial complex obtained from $\Delta$ by removing all faces containing $F$ and adding a new vertex $\rho$ as well as all sets of the form $\tau\cup\{\rho\}$ where $\tau$ does not contain $F$ but $\tau\cup F\in\Delta$.

\begin{theorem}   \label{main}
Let $\Delta$ be a $(d-1)$-dimensional simplicial complex, $F$ a face of
$\Delta$ of positive dimension, and $\Gamma$ the stellar subdivision of
$\Delta$ at $F$. If the link of $F$ in $\Delta$, $\lk F$, is
Cohen--Macaulay of dimension $d-1-|F|$, then
$h'_i(\Gamma)-h_i(\Gamma)=h'_i(\Delta)-h_i(\Delta)$ for all $i$.
\end{theorem}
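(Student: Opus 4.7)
The plan is to compare $\kk[\Delta]$ and $\kk[\Gamma]$ via two short exact sequences with a common quotient and Cohen--Macaulay kernels. Set $k := \dim F$ and $L := \lk F$ (CM of dimension $d-k-2$ by hypothesis), and view both face rings as graded modules over the polynomial ring $S' := \kk[V \cup \{\rho\}]$, letting $x_\rho$ act as zero on $\kk[\Delta]$. Direct inspection of monomials yields
\begin{equation*}
0 \to \kk[\overline{F} * L](-(k+1)) \xrightarrow{\,\cdot\, x_F\,} \kk[\Delta] \to \kk[\cost F] \to 0,
\end{equation*}
\begin{equation*}
0 \to \kk[\overline{\rho} * \partial F * L](-1) \xrightarrow{\,\cdot\, x_\rho\,} \kk[\Gamma] \to \kk[\cost F] \to 0,
\end{equation*}
where $x_F := \prod_{v \in F} x_v$ and $\overline{F}$ denotes the simplex on the vertices of $F$. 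Both kernels are face rings of joins of $L$ with $\overline{F}$ (a simplex) or with $\overline{\rho} * \partial F$ (a cone over a sphere); since joins of CM complexes are CM, both kernels are CM $S'$-modules of Krull dimension $d$.

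For a generic l.s.o.p.~$\Theta$ of $S'$, the CM property of each kernel $K$ forces $\Theta$ to be a regular sequence on $K$, so $F(K/\Theta K, \lambda) = (1-\lambda)^d F(K, \lambda)$. Using multiplicativity of $h$-polynomials under joins together with $h_{\overline F}(\lambda) = 1$ and $h_{\overline{\rho} * \partial F}(\lambda) = 1 + \lambda + \cdots + \lambda^k$, one computes
\begin{equation*}
F(K_\Gamma/\Theta K_\Gamma, \lambda) - F(K_\Delta/\Theta K_\Delta, \lambda) = h_L(\lambda)(\lambda + \lambda^2 + \cdots + \lambda^k) = h_\Gamma(\lambda) - h_\Delta(\lambda),
\end{equation*}
the second equality being the standard $f$-polynomial calculation from the closed star decomposition. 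Applying $- \otimes_{S'} S'/\Theta$ to both exact sequences and using $\tor_i^{S'}(K, S'/\Theta) = 0$ for $i \geq 1$ collapses each resulting long exact sequence of Tor's to
\begin{equation*}
0 \to \tor_1^{S'}(M, S'/\Theta) \to \tor_1^{S'}(Q, S'/\Theta) \to K/\Theta K \to M/\Theta M \to Q/\Theta Q \to 0,
\end{equation*}
where $Q := \kk[\cost F]$ is common to both $M \in \{\kk[\Delta], \kk[\Gamma]\}$. Taking alternating sums of Hilbert series and subtracting the $\Delta$ and $\Gamma$ identities, the $Q$-terms cancel and the theorem reduces to the equality $F(\tor_1^{S'}(\kk[\Gamma], S'/\Theta), \lambda) = F(\tor_1^{S'}(\kk[\Delta], S'/\Theta), \lambda)$.

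This Tor$_1$ equality is the main obstacle. For generic $\Theta$, $x_\rho$ is a non-zero-divisor on $S'/\Theta$, so a change-of-rings argument gives $\tor_*^{S'}(\kk[\Delta], S'/\Theta) \cong \tor_*^S(\kk[\Delta], S/\overline{\Theta})$ with $\overline{\Theta} := \Theta|_{x_\rho = 0}$, while $\tor_*^{S'}(\kk[\Gamma], S'/\Theta)$ is analyzed directly from a minimal $S'$-free resolution of $\kk[\Gamma]$. The key combinatorial input is that stellar subdivision at a face with CM link introduces exactly one new minimal non-face (namely $F$) together with the new vertex $\rho$ and its associated linear relations from non-faces involving $\rho$, and the CM-ness of $L$ ensures that these contributions cancel in first Koszul homology; equivalently, the Tor correction $R := \ker(K/\Theta K \to M/\Theta M)$ is concentrated in degrees controlled by $h_L$, with matching Hilbert series for $\Delta$ and $\Gamma$.
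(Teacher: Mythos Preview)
Your exact sequences and the reduction are correct: with $K_\Delta=\kk[\overline{F}\ast L](-(k{+}1))$, $K_\Gamma=\kk[\overline{\rho}\ast\partial\overline{F}\ast L](-1)$, and $Q=\kk[\cost F]$, the vanishing of $\tor_1^{S'}(K_\bullet,S'/\Theta)$ gives five-term sequences, and subtracting Hilbert series reduces the theorem precisely to $F(\tor_1^{S'}(\kk[\Delta],S'/\Theta),\lambda)=F(\tor_1^{S'}(\kk[\Gamma],S'/\Theta),\lambda)$. The problem is that you have not proved this equality; your last paragraph is not an argument. In your sequences, $\tor_1(M_\Delta)$ and $\tor_1(M_\Gamma)$ are the kernels of two \emph{different} connecting maps $\tor_1(Q)\to K_\Delta/\Theta K_\Delta$ and $\tor_1(Q)\to K_\Gamma/\Theta K_\Gamma$, with different targets, and nothing you wrote forces these kernels to have the same Hilbert function. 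The statement that ``the CM-ness of $L$ ensures that these contributions cancel in first Koszul homology'' is exactly the assertion to be proved, and listing the minimal non-faces of $\Gamma$ does not control Koszul homology against an l.s.o.p.\ $\Theta$. Since the $\tor_1$ equality is literally equivalent to the theorem, you have restated the goal rather than established it.

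The paper's proof sidesteps this entirely by placing the CM pieces in the \emph{middle} of a Mayer--Vietoris sequence instead of on the left:
\[
0\to\kk[\Delta]\to\kk[L]\oplus\kk[R]\to\kk[T]\to 0,\qquad
0\to\kk[\Gamma]\to\kk[L']\oplus\kk[R]\to\kk[T]\to 0,
\]
with $L=\overline{F}\ast\lk F$, $L'=\overline{\rho}\ast\partial\overline{F}\ast\lk F$, $R=\cost F$, and $T=\partial\overline{F}\ast\lk F=L\cap R=L'\cap R$. After tensoring with $S/\Theta$, the vanishing of $\tor_1(\kk[L])$ and $\tor_1(\kk[L'])$ leaves only the map $\tor_1(\kk[R])\to\tor_1(\kk[T])$ on the $\tor$ side, and this map is \emph{the same} in both sequences because it is induced by the inclusion $T\subset R$ and does not see $L$ or $L'$ at all. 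Hence the cokernels --- the images of the two connecting maps into $\kk(\Delta)$ and $\kk(\Gamma)$ --- are isomorphic graded spaces, and $h'_i(\Gamma)-h'_i(\Delta)=h_i(L')-h_i(L)=h_i(\Gamma)-h_i(\Delta)$ falls out immediately. The lesson is to arrange the decomposition so that the uncontrolled $\tor$ terms are \emph{shared} between the two sequences, not merely analogous; your costar sequences put the CM piece on the wrong side for this cancellation to be automatic.
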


\begin{proof}
 Our proof is motivated by the ideas from \cite{KuNe}.
Let $V$ be the vertex set of $\Gamma$ and  $V-\{\rho\}$ the vertex set
of $\Delta$. We write $\overline{F}$
to denote the simplex $F$ with all its faces. Consider the following
subcomplexes:
 \begin{eqnarray*}
 L&=&\overline{F}\ast\lk F
 \mbox{ --- the closed star of $F$ in $\Delta$,} \\
 R&=&\{G\in\Delta : G\not\supseteq F\}
   \mbox{ --- the costar of $F$ in $\Delta$,}\\
 L'&=&\overline{\rho}\ast \partial \overline{F} \ast \lk F
 \mbox{ --- the join of the boundary complex of $F$
   with $\rho$ and the link of $F$.}
 \end{eqnarray*}
Then $\Delta=L\cup R$, $\Gamma=L'\cup R$, and $L\cap R=\partial
\overline{F}\ast\lk F=L'\cap R$  --- denote
this common intersection by $T$. Therefore, we have the following short
exact sequences of $S$-modules (recall that $S=\kk[x_v : v\in V]$),
where the maps $\phi,\psi, \phi',\psi'$ are induced by inclusions of the
corresponding subcomplexes:
\begin{eqnarray*}
0\longrightarrow \kk[\Delta]
 \stackrel{\psi}{\longrightarrow} \kk[L]\oplus\kk[R]
\stackrel{\phi}{\longrightarrow} \kk[T] \longrightarrow 0, \\
0\longrightarrow \kk[\Gamma]
 \stackrel{\psi'}{\longrightarrow} \kk[L']\oplus\kk[R]
\stackrel{\phi'}{\longrightarrow} \kk[T] \longrightarrow 0.
\end{eqnarray*}

Let $\Theta=(\theta_1, \ldots, \theta_d)$ be an ideal of $S$ generated by
$d$ generic linear forms. In particular, we can assume that $\{\theta_1,
\ldots, \theta_d\}$ is an l.s.o.p.~for each of the modules
$\kk[\Delta]$, $\kk[\Gamma]$, $\kk[L]$, and $\kk[L']$. (Since
$\dim\Delta=\dim\Gamma=\dim L=\dim L'=d-1$, all four of these modules
have Krull dimension $d$.
Note however that $T$ and $R$ may have a strictly smaller dimension.)

We now tensor the above two sequences with $S/\Theta$ (over $S$).
As $\lk F$ is CM of dimension $d-1-|F|$, it follows that
$L=\overline{F}\ast \lk F$ and $L'=\overline{\rho}\ast \partial
\overline{F}\ast \lk F$ are both
CM of dimension $d-1$, and so $\kk[L]$ and $\kk[L']$ are free
$\kk[\theta_1, \ldots, \theta_d]$-modules. The fact that for any
$S$-module $M$,
$\tor_1^S(M, S/\Theta)=\ker (M\otimes_s \Theta S \rightarrow M)$,
then implies that $$\tor_1^S(\kk[L], S/\Theta)=\tor_1^S(\kk[L'],
S/\Theta)=0.$$ Thus by naturality of the long exact $\tor$ sequence, we
obtain the following commutative diagram:
\[\minCDarrowwidth16pt\begin{CD}
\tor_1^S(\kk[R], S/\Theta) @>{\phi_\star}>>  \tor_1^S(\kk[T], S/\Theta)
 @>{\delta}>> \kk(\Delta)
 @>{\psi_\star}>>  \kk(L)\oplus \kk[R]/\Theta @>{\phi_\star}>>
\kk[T]/\Theta @>>> 0 \\
 @A{\id}AA  @A{\id}AA  @.  @. @. \\
\tor_1^S(\kk[R], S/\Theta) @>{\phi'_\star}>>  \tor_1^S(\kk[T], S/\Theta)
 @>{\delta'}>> \kk(\Gamma)
 @>{\psi'_\star}>> \kk(L') \oplus \kk[R]/\Theta @>{\phi'_\star}>>
\kk[T]/\Theta @>>> 0, \\
\end{CD}
\]
where $\id$ is the identity map, and $\delta$ and $\delta'$ are
connecting homomorphisms. Hence for all $i$,
\[
\dim_{\kk} \kk(\Gamma)_i - (\dim_{\kk} (\kk[R]/\Theta)_i+\dim_{\kk}
\kk(L')_i)=
\dim_{\kk} \kk(\Delta)_i - (\dim_{\kk} (\kk[R]/\Theta)_i+\dim_{\kk}
\kk(L)_i), \]
yielding that \[
h'_i(\Gamma)-h'_i(\Delta)=h'_i(L')-h'_i(L)=h_i(L')-h_i(L)=
h_i(\Gamma)-h_i(\Delta) \quad \forall i,
\]
as required.
The second step in this computation holds since $L$ and $L'$ are CM, and
thus $h'(L)=h(L)$
and $h'(L')=h(L')$; the last step follows from the fact that
$f_j(\Delta)=f_j(L)+f_j(R)-f_j(T)$
and  $f_j(\Gamma)=f_j(L')+f_j(R)-f_j(T)$ for all $j$, so that
$f_j(L')-f_j(L)=f_j(\Gamma)-f_j(\Delta)$ for all $j$, and hence
also $h_i(L')-h_i(L)=h_i(\Gamma)-h_i(\Delta)$ for all $i$.
\end{proof}

\small{

}

\end{document}